\tikzstyle{vertex}=[circle, draw, inner sep=0pt, minimum size=4pt]
\newtheorem{theorem}{Theorem}[section]
\newtheorem{lemma}[theorem]{Lemma}
\newtheorem{corollary}[theorem]{Corollary}
\newtheorem*{MainTheorem1}{Theorem~\ref{thm:ineq_desc}}
\newtheorem*{MainTheorem2}{Theorem~\ref{thm:face}}
\newtheorem*{MainTheorem3}{Theorem~\ref{thm:int_equiv}}
\newtheorem*{MainCorollary1}{Corollary~\ref{cor:volume}}
\theoremstyle{definition}
\newtheorem{definition}[theorem]{Definition}
\newtheorem{example}[theorem]{Example}
\theoremstyle{remark}
\newtheorem{remark}[theorem]{Remark}
\numberwithin{equation}{section}
\renewcommand{\P}{\mathcal{P}}
\newcommand{\pasm}{\mathrm{PASM}}
\newcommand{\asm}{\mathrm{ASM}}
\newcommand{\Cat}{\mathrm{Cat}}
\newcommand{\asmcry}{\mathrm{ASMCRY}}
\newcommand{\aff}{\mathrm{aff}}
\author{Dylan Heuer, Sara Solhjem and Jessica Striker}
\email{heuerd@msoe.edu,sara.solhjem@mnstate.edu jessica.striker@ndsu.edu}
\address{Milwaukee School of Engineering, Minnesota State University Moorhead, North Dakota State University}
\title{ On {\Large $\nu$}  faces of partial alternating sign matrix polytopes} 
\keywords{}
\subjclass[2010]{05A05, 52B05}
\begin{document}
\begin{abstract}
We define and study the $(\nu / \lambda)$-partial alternating sign matrix polytope,  motivated by connections to the Chan-Robbins-Yuen polytope and the $\nu$-Tamari lattice.
We determine the inequality description and show this polytope is a face of the partial alternating sign matrix polytope of [Heuer, Striker 2022]. We show that the $(\nu / \lambda)$-partial ASM polytope is an order polytope and a flow polytope. 
\end{abstract}

\maketitle

\section{Introduction and background}
The ASM-CRY polytope is an alternating sign matrix analogue of the CRY polytope: a face of the Birkhoff polytope studied by Chan, Robbins, and Yuen~\cite{CRY2000} whose volume is given as a product of Catalan numbers~\cite{ZeilbergerCRY}. 
It was shown in \cite{ASMCRY} that the ASM-CRY polytope is integrally equivalent to an order polytope, which provided a proof of its lovely combinatorial properties, such as counting formulas for its volume and vertices. More generally, \cite{ASMCRY} showed that order polytopes of strongly planar posets are flow polytopes. Subsequent work \cite{BGHHKMY2019,MeszarosMorales2019,LiuMeszarosStDizier2019, JangKim2020,unifyingframework} built upon this connection between order and flow polytopes, but there was no interpretation in terms of alternating sign matrices. 
In this paper, we reunite these topics.
In particular, we were inspired by the polytope perspective on the $\nu$-Tamari lattice in \cite{unifyingframework} to define and study an analogue of the ASM-CRY polytope of \cite{ASMCRY} using the partial alternating sign matrix polytope of \cite{HeuerStriker}. Furthermore, we show this polytope is both an order polytope and a flow polytope.

We first give the definition of alternating sign matrices and  discuss relevant prior work on the ASM-CRY polytope; we then describe our main results.

\begin{definition}
\label{def:asm}
An \emph{alternating sign matrix} is an $n \times n$ matrix $M = \left(M_{ij}\right)$ with entries in $\left\{-1,0,1\right\}$ such that:
\begin{align}
\label{eq:ASM1}
\displaystyle\sum_{i'=1}^{i} M_{i'j} &\in \left\{0,1\right\}, & \mbox{ for all } 1 \leq i, j \leq n \\
\label{eq:ASM2}
\displaystyle\sum_{j'=1}^{j} M_{ij'} &\in \left\{0,1\right\}, & \mbox{ for all } 1 \leq i, j \leq n \\
\label{eq:ASM3}
\displaystyle\sum_{i'=1}^{n} M_{i'j} &= \displaystyle\sum_{j'=1}^{n} M_{ij'} = 1.
\end{align}
We denote the set of all $n \times n$ alternating sign matrices as $\asm_{n}$. Define the \emph{alternating sign matrix polytope} $\asm(n)$   as the convex hull of these matrices.
\end{definition}

Alternating sign matrices arose in the study of the lambda-determinant~\cite{RobbinsRumsey}. They have a nice enumeration formula~\cite{MRRASMDPP,ZEILASM,kuperbergASMpf} and connections to statistical physics~\cite{kuperbergASMpf}.
The alternating sign matrix polytope was defined  as above by Striker in~\cite{ASMPoly}, where she also found an inequality description, enumerated the facets, and studied its projection to the permutohedron. Independently, Behrend and Knight defined this polytope in terms of inequalities and proved the vertex description~\cite{Behrend2007HigherSA}. In addition, they studied lattice points in the $r$th-dilate of the alternating sign matrix
polytope, which they called \emph{higher spin} alternating sign matrices.

In \cite{ASMCRY}, a face of the alternating sign matrix polytope with some prescribed zeros in certain parts of the matrix was studied and shown to be integrally equivalent to an order polytope. We give the definition of this polytope and state the relevant theorems below. (Note our convention differs from that of \cite{ASMCRY} by vertical reflection of the matrix.)

Let $\delta_n = (n-1, n-2,\ldots , 2, 1)$ be the staircase partition and 
consider the partition $\lambda = (\lambda_1, \lambda_2,\ldots, \lambda_{\ell}) \subseteq \delta_n$ as the positions $(i,j)$ of an $n\times n$ matrix given by $\{(i, j) \ | \ 1 \leq i \leq \ell, 1\leq j\leq \lambda_i\}$. 

\begin{definition}
\label{def:asmcry}
Define the \emph{$\lambda$-ASMCRY polytope}
$\asmcry(\lambda,n) := 
\{(a_{ij})^n_{i,j=1}\in \asm(n) \  | \ a_{ij} = 0  \text{  for } i + j \geq n+3 \text{ or } (i, j) \in\lambda$\}. 
\end{definition}

The main results about this polytope from \cite{ASMCRY} are the following; see Section~\ref{sec:orderflow} for definitions of the notation.
\begin{theorem}[\protect{\cite[Theorem 1.1]{ASMCRY}}]
\label{thm:asmcryequiv}
The polytope $\asmcry(\lambda,n)$ is integrally equivalent to the order polytope $\mathcal{O}(P(\delta_n / \lambda))$
and the flow polytope $\mathcal{F}_{G_{P(\delta_n / \lambda)}}$.
\end{theorem}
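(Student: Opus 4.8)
The plan is to produce an explicit affine, unimodular change of coordinates identifying $\asmcry(\lambda,n)$ with the order polytope $\mathcal{O}(P(\delta_n/\lambda))$, and then to pass to the flow polytope using the general correspondence for strongly planar posets. First I would introduce the corner-sum map $\Phi$ sending a matrix $A=(a_{ij})\in\asmcry(\lambda,n)$ to the array $C=(C_{ij})$ with $C_{ij}=\sum_{i'\le i,\,j'\le j}a_{i'j'}$. This map is linear with an integer, triangular matrix, and its inverse is the integer second-difference operator $a_{ij}=C_{ij}-C_{i-1,j}-C_{i,j-1}+C_{i-1,j-1}$, so once I know $\Phi$ is a bijection onto $\mathcal{O}(P(\delta_n/\lambda))$ integral equivalence is immediate. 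The ASM conditions translate cleanly: \eqref{eq:ASM1} and \eqref{eq:ASM2} say precisely that $C_{ij}-C_{i-1,j}$ and $C_{ij}-C_{i,j-1}$ lie in $[0,1]$ on the polytope, i.e.\ that $C$ is order preserving for the grid order on cells (where $(i',j')\le(i,j)$ iff $i'\le i$ and $j'\le j$), while \eqref{eq:ASM3} fixes the outer boundary values.

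Next I would show that the two families of forced zeros cut this monotone array down to the skew shape. Because $\lambda$ is itself a lower set of the grid, every entry of $A$ inside $\lambda$ is zero and hence $C_{ij}=0$ for $(i,j)\in\lambda$; likewise the vanishing of $a_{ij}$ for $i+j\ge n+3$, together with the row and column sum conditions, pins $C$ to its boundary values on and beyond the antidiagonal. Thus the genuinely free coordinates of $C$ are indexed exactly by the cells of $\delta_n/\lambda$, the residual relations $C_{i-1,j}\le C_{ij}$ and $C_{i,j-1}\le C_{ij}$ among them are exactly the cover relations of $P(\delta_n/\lambda)$, and the pinned values supply the roles of $\hat{0}$ and $\hat{1}$. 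The one substantive point is to check that the forced zeros force $0\le C_{ij}\le 1$ throughout the skew shape (rather than merely $C_{ij}\le\min(i,j)$, as holds for a general ASM), so that the image is the order polytope itself and not a dilate; this is where the lower-right vanishing is used in an essential way.

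For the flow polytope I would invoke the general result, available in this setting, that the order polytope of a strongly planar poset is integrally equivalent to the flow polytope of the directed graph $G_{P}$ built from the Hasse diagram of $P\cup\{\hat{0},\hat{1}\}$, with an order-preserving $f$ corresponding to the flow recording the differences of $f$-values across edges. It then remains only to observe that $P(\delta_n/\lambda)$, being the natural cell poset of a planar skew diagram, is strongly planar, after which this half is formal.

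I expect the main obstacle to be the second step, specifically the bound $C_{ij}\le 1$ on $\delta_n/\lambda$ and the verification that the pinned and free regions meet along the ragged boundary of the skew shape without losing or introducing any inequality, so that the surviving constraints match the covers of $P(\delta_n/\lambda)$ on the nose and $\Phi$ is simultaneously integral in both directions. Once this coordinate bookkeeping is pinned down, both the order-polytope identification and the flow-polytope consequence follow.
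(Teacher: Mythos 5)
Your proposal is correct and follows essentially the same route as the source: the cited result (and the analogous Theorem~\ref{thm:int_equiv} proved in this paper via the map $\Psi$) uses exactly the northwest corner-sum map, its unimodular triangular/second-difference inverse, the verification that the forced zeros and sum conditions pin the image to $\mathcal{O}(P(\delta_n/\lambda))$, and the strongly-planar-poset correspondence for the flow polytope. No substantive difference to report.
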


\begin{corollary}[\protect{\cite[Corollaries 5.7 and 1.2]{ASMCRY}}]
The normalized volume of $\asmcry(\lambda,n)$  is given by the number of linear extensions
$e(P(\delta_n / \lambda))$. Its Ehrhart polynomial $L_{\asmcry(\lambda,n)}(t)$ equals the order polynomial
$\Omega(P(\delta_n / \lambda),t + 1)$.

Moreover, $\asmcry(\emptyset, n)$ has $\Cat(n) = \frac{1}{n+1}\binom{2n}{n}$ vertices, its normalized volume is given by the number of standard Young tableaux of shape $\delta_n$, and its Ehrhart polynomial is $L_{\asmcry(\emptyset, n)}(t) = \Omega(P(\delta_n),t + 1) = \prod_{1\leq i<j\leq n} \frac{2t + i + j - 1}{i + j - 1}$.
\end{corollary}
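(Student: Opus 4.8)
The plan is to deduce all four assertions from Theorem~\ref{thm:asmcryequiv} by transporting standard facts about order polytopes across the integral equivalence. Because integrally equivalent polytopes share the same Ehrhart polynomial—and therefore the same normalized volume and the same number of lattice points in each dilate—it is enough to prove each statement for the order polytope $\mathcal{O}(P(\delta_n/\lambda))$.

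First I would invoke Stanley's two basic theorems on order polytopes: for a finite poset $P$, the normalized volume of $\mathcal{O}(P)$ equals the number of linear extensions $e(P)$, and the Ehrhart polynomial of $\mathcal{O}(P)$ is the order polynomial, $L_{\mathcal{O}(P)}(t) = \Omega(P, t+1)$. Taking $P = P(\delta_n/\lambda)$ yields the first two claims at once: $\asmcry(\lambda,n)$ has normalized volume $e(P(\delta_n/\lambda))$ and Ehrhart polynomial $\Omega(P(\delta_n/\lambda), t+1)$.

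For the specialization $\lambda = \emptyset$ there are three things to check. (i) The vertices of $\mathcal{O}(P)$ are the indicator functions of the order filters of $P$, so their number equals the number of order ideals (equivalently, antichains) of $P(\delta_n)$; since $P(\delta_n)$ is the type $A_{n-1}$ root poset, its antichains are the nonnesting partitions of $[n]$ (concretely, one may use a bijection with Dyck paths, as in the small cases $n=2,3$ giving $2$ and $5$), which are counted by $\Cat(n)$, giving the vertex count. (ii) The linear extensions of the cell poset of a Young diagram are in bijection with the standard Young tableaux of that shape, so $e(P(\delta_n))$ equals the number of SYT of shape $\delta_n$. (iii) The product formula is the explicit evaluation of $\Omega(P(\delta_n), t+1)$, which I would obtain from the reverse plane partition generating function for the staircase, or by citing the known product formula for the order polynomial of the staircase poset.

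The step requiring the most care is the vertex count (i): identifying $P(\delta_n)$ with the type $A_{n-1}$ root poset and invoking the Catalan enumeration of its antichains through an explicit combinatorial model. The volume and Ehrhart statements are immediate once the integral equivalence of Theorem~\ref{thm:asmcryequiv} is in hand, and the product formula in (iii) is a direct specialization.
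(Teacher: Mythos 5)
The paper does not prove this corollary itself---it is quoted from \cite{ASMCRY}, where it is deduced from Theorem~\ref{thm:asmcryequiv} via Stanley's order-polytope theory, exactly as you do (and exactly as this paper proves its own analogue, Corollary~\ref{cor:volume}). Your derivation is correct and follows the same route: integral equivalence transports volume, Ehrhart polynomial, and vertex count to $\mathcal{O}(P(\delta_n/\lambda))$, where linear extensions, the order polynomial, the Catalan count of filters of the staircase poset, and the known product formula give the four claims.
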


The partial alternating sign matrix polytope $\pasm(m,n)$ was introduced and studied in \cite{HeuerStriker}. This polytope is defined as the convex hull of \emph{partial alternating sign matrices}, a natural extension of alternating sign matrices, whose definition we give in Definition~\ref{def:pasm}. The authors of \cite{HeuerStriker} proved an inequality description of the partial alternating sign matrix polytope. They also enumerated the facets and characterized the face lattice, in an analogous way to that of the alternating sign matrix polytope. 

In this paper, we study polytopes $\pasm(\nu/\lambda,m,n)$ (see Definition~\ref{def:poly_nu_lambda}) contained in partial alternating sign matrix polytopes, with certain entries set equal to $0$, analogous to the ASM-CRY polytope of Definition~\ref{def:asmcry}. We find similar connections to order and flow polytopes and the results of \cite{ASMCRY} discussed above. Our theorems are not straightforward generalizations of results of either \cite{HeuerStriker} or \cite{ASMCRY}. For instance, our proof of Theorem~\ref{thm:ineq_desc} involves a novel induction. Also, we show in Proposition~\ref{prop:equiv} that when restricted to square matrices and the appropriate partition shapes, $\pasm(\delta_n/\lambda,n,n)$ is not equal to $\asmcry(\lambda, n)$, but the polytopes are integrally equivalent.

Our first main result appears in Section~\ref{sec:ineq}, where we prove the following inequality description.
\begin{MainTheorem1}
Let $\lambda,\nu$ be partitions inside the partition $n^m$ such that $\lambda\subseteq\nu$. Then $\pasm(\nu/\lambda,m,n)$ is given by the inequalities:
\begin{align*}
0  \leq \displaystyle\sum_{i'=1}^{i} X_{i'j} \leq 1, &\text{ for all } 1 \leq i \leq m, 1 \leq j \leq n \\
0  \leq \displaystyle\sum_{j'=1}^{j} X_{ij'} \leq 1, &\text{ for all }  1 \leq i \leq m, 1 \leq j \leq n \\
\displaystyle\sum_{i=1}^{m} X_{i1} = 1 & \text{ and } \displaystyle\sum_{j=1}^{n} X_{1j} = 1 \\
\displaystyle\sum_{j=1}^{n} X_{ij} = 0 &\text{ for } 2 \leq i \leq m \text{ and } \sum_{i=1}^{m}{1 \leq i \leq m} X_{ij} = 0 \text{ for } 2 \leq j \leq n \\
X_{ij}  = 0 & \text{ if } j\leq \lambda_{i}  \text{ for } 1\leq i\leq n \\
X_{ij}  = 0 & \text{ if } \nu_{i-1} < j \leq n \text{ for } i>1, \text{ and } X_{ij} = 0 \text{ if } \nu_1 +1 < j  \leq n.
\end{align*}
\end{MainTheorem1}

Then, in Section~\ref{sec:face}, we show that $\pasm(\nu / \lambda,m,n)$ is a face of $\pasm(m,n)$.

\begin{MainTheorem2}
Let $\lambda \subseteq \nu \subseteq (n-1)^{m-1}$. Then $\pasm(\nu / \lambda,m,n)$ is a face of $\pasm(m,n)$ of dimension $|\nu|-|\lambda|$.
\end{MainTheorem2}

Our final main result, in Section~\ref{sec:orderflow}, shows that $\pasm(\nu / \lambda,m,n)$ is integrally equivalent to an order polytope of a strongly planar graph (and thus a flow polytope, see Corollary~\ref{cor:flow}).

\begin{MainTheorem3}
The $(\nu / \lambda)$-partial ASM polytope $\pasm(\nu / \lambda,m,n)$ is integrally equivalent to the order polytope $\mathcal{O}(P(\nu / \lambda))$.
\end{MainTheorem3}

This leads to the following result on the volume and Ehrhart polynomial of $\pasm(\nu / \lambda,m,n)$. Note for a poset $P$, $\Omega(P,t)$ denotes the number of order-preserving maps $\eta: P \rightarrow \{1, \ldots, t\}$.

\begin{MainCorollary1}
The normalized volume of $\pasm(\nu / \lambda,m,n)$ is  $e(P(\nu / \lambda))$  and its Ehrhart polynomial is $L_{\pasm(\nu / \lambda,m,n)}(t) = \Omega(P(\nu / \lambda),t + 1)$. 
In particular, $e(P(\nu / \lambda))$ is the number of skew standard Young tableaux, enumerated by the Naruse hook-length formula
\[e(P(\nu / \lambda))=|\nu / \lambda|!\sum_{D\in\mathcal{E}(\nu / \lambda)}\prod_{u\in[\nu]\setminus D}\frac{1}{h(u)},\]
where $\mathcal{E}(\nu / \lambda)$ is the set of excited diagrams of $\nu / \lambda$ and $h(u)$ is the hook number of a box $u$.
\end{MainCorollary1}

\section{Definition of the $(\nu / \lambda)$-partial ASM polytope}
In this section, we define matrices associated to partitions and use them to define the polytope $\pasm(\nu / \lambda,m,n)$ studied in this paper. We also prove Lemma~\ref{lem:Mmu}, which is used in the next section in proving the inequality description of this polytope.

A \emph{partition} $\mu=(\mu_1,\mu_2,\ldots)$ is an infinite sequence of integers satisfying $\mu_1\geq \mu_2\geq \cdots \geq \mu_{\ell}>\mu_{\ell+1}=0$ for some $\ell>0$. Call $\ell$ the \emph{length} of the partition.

\begin{definition}[\protect{\cite[Def.\ 2.5]{HeuerStriker}}]
\label{def:pasm}
An $m \times n$ \emph{partial alternating sign matrix} is an $m \times n$ matrix $M = \left(M_{ij}\right)$ with entries in $\left\{-1,0,1\right\}$ such that:
\begin{align}
\label{eq:pasm1}
\displaystyle\sum_{i'=1}^{i} M_{i'j} &\in \left\{0,1\right\}, & \mbox{ for all } 1 \leq i \leq m , 1 \leq j \leq n. \\
\label{eq:pasm2}
\displaystyle\sum_{j'=1}^{j} M_{ij'} &\in \left\{0,1\right\}, & \mbox{ for all } 1 \leq i \leq m, 1 \leq j \leq n.
\end{align}
We denote the set of all $m \times n$ partial alternating sign matrices as $\pasm_{m,n}$.
\end{definition}


The polytope determined as the convex hull of these matrices is defined below. See \cite{HeuerStriker} for many more facts about this polytope, including its inequality description, vertices, facet count, and face lattice.
\begin{definition}[\protect{\cite[Def.\ 4.1]{HeuerStriker}}]
Define the $(m,n)$-partial alternating sign matrix polytope $\pasm(m,n)$ to be the convex hull (as vectors in $\mathbb{R}^{mn}$) of all matrices in $\pasm_{m,n}$. 
\end{definition}

Given a partition contained in the rectangular partition $n^m$, we specify a partial sum matrix. We then determine the row and column sums of all such matrices.
\begin{definition}
\label{Mmu}
 Suppose $\mu$ is a partition with $\ell<m$ positive parts and $\mu_1<n$.
Let $M^{\mu}(m, n)$  be the $m \times n$ matrix defined entrywise as below, where $k$ satisfies $1\leq k\leq m-1$:

\begin{subnumcases}{M^{\mu}_{ij}(m,n)=}
1 & \text{if } $(i,j)=(1,\mu_1+1)$  \label{case0}\\
1 & \text{if } $(i,j)=(k+1,\mu_{k+1}+1)$ \text{ and } $\mu_k>\mu_{k+1}$ \label{case1}\\
-1 & \text{if } $(i,j)=(k+1,\mu_{k}+1)$ \text{ and } $\mu_k>\mu_{k+1}$  \label{case2}\\
0 & \text{otherwise.}\label{case4}
\end{subnumcases}

When $m,n$ are understood, we use the notation $M^{\mu}$.
\end{definition}

\begin{lemma}
\label{lem:Mmu}
Fix $m,n>0$. For any partition $\mu \subseteq n^m$, $M^{\mu}(m,n)$ is a partial alternating sign matrix with first row sum $1$, first column sum $1$ and all other row and column sums $0$.
\end{lemma}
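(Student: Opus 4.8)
The plan is to verify directly each defining condition of a partial alternating sign matrix, together with the claimed row- and column-sum values, by exploiting the very simple structure of $M^\mu$. First I would record the shape of the matrix from cases \eqref{case0}--\eqref{case4}: the only nonzero entries are a single $+1$ in position $(1,\mu_1+1)$, and, for each row $i\geq 2$ with $\mu_{i-1}>\mu_i$, a $+1$ in column $\mu_i+1$ together with a $-1$ in column $\mu_{i-1}+1$; every remaining row (those with $\mu_{i-1}=\mu_i$) is identically zero. The standing hypotheses of Definition~\ref{Mmu} (that $\mu$ has $\ell<m$ positive parts and $\mu_1<n$) guarantee both that all of these columns lie in $\{1,\dots,n\}$ and that $\mu_m=0$, which I will use at the end.

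The partial row sum condition \eqref{eq:pasm2} and the row sums I would then check by inspection of this list. Row $1$ contributes a single $+1$, so its partial sums jump from $0$ to $1$ at column $\mu_1+1$ and its total equals $1$. For $i\geq 2$, either the row is zero, or it carries its $+1$ strictly to the left of its $-1$, since $\mu_i<\mu_{i-1}$ forces $\mu_i+1<\mu_{i-1}+1$; in both cases the partial row sums take only the values $0$ and $1$ and the total is $0$. This establishes \eqref{eq:pasm2} and shows the first row sum is $1$ while all other row sums are $0$.

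The heart of the argument is the column condition \eqref{eq:pasm1}, which I would obtain from the following claim proved by induction on $i$: the partial column sum after the first $i$ rows, $v^{(i)}_j:=\sum_{i'=1}^{i}M^\mu_{i'j}$, is the indicator of column $\mu_i+1$, that is $v^{(i)}_j=1$ when $j=\mu_i+1$ and $v^{(i)}_j=0$ otherwise. The base case $i=1$ is immediate from \eqref{case0}. For the inductive step from row $i$ to row $i+1$ (so $k=i$, with $1\leq i\leq m-1$): if $\mu_i=\mu_{i+1}$ the new row is zero, so $v^{(i+1)}=v^{(i)}$ is already the indicator of $\mu_{i+1}+1=\mu_i+1$; if $\mu_i>\mu_{i+1}$, then adding the $+1$ in column $\mu_{i+1}+1$ and the $-1$ in column $\mu_i+1$ cancels the old indicator at $\mu_i+1$ and creates a new one at $\mu_{i+1}+1$. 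In either case $v^{(i+1)}$ is the indicator of column $\mu_{i+1}+1$, completing the induction.

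Since each $v^{(i)}$ is a $0/1$ indicator vector, every partial column sum lies in $\{0,1\}$, giving \eqref{eq:pasm1}; taking $i=m$ and using $\mu_m=0$ shows the full column sums form the indicator of column $1$, so the first column sum is $1$ and all others are $0$. Combined with the entrywise bound $M^\mu_{ij}\in\{-1,0,1\}$ that is immediate from the definition, these steps show $M^\mu$ is a partial alternating sign matrix with the stated sums. The one place that genuinely requires care, and the only step where the inductive bookkeeping is essential, is the telescoping of the partial column sums in the case $\mu_i>\mu_{i+1}$; in particular I would flag that the degenerate rows $\mu_{i-1}=\mu_i$ must be treated separately so that the ``indicator stays put'' branch of the induction is not overlooked. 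Everything else reduces to routine case checks.
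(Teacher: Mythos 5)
Your proof is correct. The row-sum half coincides with the paper's argument, but your column argument takes a genuinely different route. The paper reasons locally: it notes each column contains at most one $1$ and at most one $-1$, and for the $-1$ at position $(k+1,\mu_k+1)$ it locates the matching $1$ above it by taking $p$ to be the largest index below $k$ with $\mu_p>\mu_k$ and applying (\ref{case1}) in row $p+1$. You instead establish the stronger global invariant that the vector of partial column sums after $i$ rows is exactly the indicator of column $\mu_i+1$, by induction on $i$ with the telescoping step $v^{(i+1)}=v^{(i)}+e_{\mu_{i+1}+1}-e_{\mu_i+1}$ in the nondegenerate case. That invariant delivers (\ref{eq:pasm1}) and the full column sums simultaneously (take $i=m$ and use $\mu_m=0$), where the paper treats those two points by separate arguments; your version is arguably the cleaner bookkeeping and makes the exact value of every partial column sum visible, not just its membership in $\{0,1\}$. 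You are also right to lean explicitly on the standing hypotheses of Definition~\ref{Mmu} ($\ell<m$ and $\mu_1<n$), which guarantee that every named column index lies in $\{1,\dots,n\}$ and that $\mu_m=0$; the paper's proof hedges with ``if $\mu_k<n$'' at one point, a case that cannot actually occur under those hypotheses. Like the paper, you implicitly read the lemma's ``$\mu\subseteq n^m$'' as subject to the definition's restrictions, which is how it is used throughout the rest of the paper.
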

\begin{proof}
By construction, $M^{\mu}(m,n)$ is an $m\times n$ matrix with entries in $\{-1,0,1\}$. We need to show (\ref{eq:pasm1}) and (\ref{eq:pasm2}).

Also by construction, there is at most one $1$ in any row or column, so the partial sums cannot exceed~$1$. Furthermore, any $-1$, of which there is at most one in any row or column, appears to the right of a $1$ in the same row  and below another $1$ in the same column. We justify this as follows. 

Consider row $1$ of $M^{\mu}$. Since (\ref{case0}) is the only case that applies, row $1$ has at most one $1$ and no $-1$. This satisfies (\ref{eq:pasm2}) for $i=1$. Suppose $\mu_k=\mu_{k+1}$, then all entries of row $k+1$ in $M^{\mu}$ are $0$, satisfying (\ref{eq:pasm2}).
Suppose $\mu_k>\mu_{k+1}$. Then by (\ref{case1}), $M^{\mu}_{k+1,\mu_{k+1}+1}=1$, by (\ref{case2}) $M^{\mu}_{k+1,\mu_{k}+1}=-1$ if $\mu_k<n$, and all other entries of row $k+1$ equal $0$. Thus, row $k+1$ consists of a $1$ that has  a $-1$ somewhere later in its row and all other entries equal to $0$; this satisfies (\ref{eq:pasm2}) for $i=k+1$. 

We now show (\ref{eq:pasm1}) holds. Consider column $1$ of $M^{\mu}$. The only case that applies to column $1$ is  (\ref{case1}) when $k=\ell$, thus column $1$ has exactly one $1$ and no $-1$. This satisfies (\ref{eq:pasm1}) for $j=1$. For column $j>1$, suppose $M^{\mu}_{ij}=-1$. Then by (\ref{case2}), $i=k+1$ and $j=\mu_k+1$ for some $k$ such that $\mu_k>\mu_{k+1}$. Suppose $p$ is the largest integer less than $k$ such that $\mu_p>\mu_k$. So $\mu_{p+1}=\mu_{k}$. So by (\ref{case1}), $M^{\mu}_{p+1,\mu_{p+1}+1}=M^{\mu}_{p+1,\mu_k+1}=1$, so the $-1$ is below a $1$ and all other entries of column $\mu_k+1$ equal $0$. This satisfies (\ref{eq:pasm1}) for $j>1$.

This means that a $1$ is always ``before'' a $-1$ so the partial sums are never negative.
\end{proof}

See Figure~\ref{fig:5x7example} for an example.

\begin{figure}[htbp]
\begin{tikzpicture}
\begin{scope}[scale=0.5]
    \draw[fill, color=gray!33] (0,0) rectangle (2,1);
    \draw[fill, color=gray!33] (1,1) rectangle (4,2);
    \draw[fill, color=gray!33] (3,2) rectangle (4,3);
     \draw[fill, color=gray!33] (3,3) rectangle (6,4);
      \draw[fill, color=gray!33] (5,4) rectangle (6,5);
    \draw[fill, color=red!33] (2,0) rectangle (4,1);
    \draw[fill, color=red!33] (4,0) rectangle (7,3);
    \draw[fill, color=red!33] (6,3) rectangle (7,5);
    \draw[very thin, color=gray!100] (0,0) grid (7,5);
    \draw[very thick, blue] (0,0) -- (0,1) -- (1,1) -- (1,2) -- (3,2) -- (3,4) -- (5,4) -- (5,5) -- (7,5);
    \node at (0.5,4.5) {0};
    \node at (1.5,4.5) {0};
    \node at (2.5,4.5) {0};
    \node at (3.5,4.5) {0};
    \node at (4.5,4.5) {0};
    \node at (5.5,4.5) {1};
    \node at (6.5,4.5) {0};
    
    \node at (0.5,3.5) {0};
    \node at (1.5,3.5) {0};
    \node at (2.5,3.5) {0};
    \node at (3.5,3.5) {1};
    \node at (4.5,3.5) {0};
    \node at (5.5,3.5) {-1};
    \node at (6.5,3.5) {0};
    
    \node at (0.5,2.5) {0};
    \node at (1.5,2.5) {0};
    \node at (2.5,2.5) {0};
    \node at (3.5,2.5) {0};
    \node at (4.5,2.5) {0};
    \node at (5.5,2.5) {0};
    \node at (6.5,2.5) {0};
    
    \node at (0.5,1.5) {0};
    \node at (1.5,1.5) {1};
    \node at (2.5,1.5) {0};
    \node at (3.5,1.5) {-1};
    \node at (4.5,1.5) {0};
    \node at (5.5,1.5) {0};
    \node at (6.5,1.5) {0};
    
    \node at (0.5,0.5) {1};
    \node at (1.5,0.5) {-1};
    \node at (2.5,0.5) {0};
    \node at (3.5,0.5) {0};
    \node at (4.5,0.5) {0};
    \node at (5.5,0.5) {0};
    \node at (6.5,0.5) {0};
    
\end{scope}
\end{tikzpicture}
\caption{The matrix $M^{(5,3,3,1)}_{5,7}$ with $(5,3,3,1)$ shown in white. The border strip (Definition~\ref{def:border_strip}) associated to $(5,3,3,1)$ is shown in grey.}
\label{fig:5x7example}
\end{figure}

We now define the set of matrices of the associated polytope that will be the main focus of study in this paper.
\begin{definition}
Let $\lambda,\mu,\nu$ be partitions inside the partition $(n-1)^{m-1}$ such that $\lambda\subseteq\mu\subseteq\nu$.
Define $\pasm_{m,n}^{\nu / \lambda} = \left\{M^{\mu}(m,n)\; | \; \lambda\subseteq\mu\subseteq \nu \right\}$. See Figure \ref{fig:4x5examples}.
\end{definition}

\begin{definition}
\label{def:poly_nu_lambda}
Define the $(\nu / \lambda)$-partial ASM polytope $\pasm(\nu / \lambda,m,n)$ to be the convex hull (as vectors in $\mathbb{R}^{mn}$) of all matrices in  $\pasm^{\nu / \lambda}_{m,n}$. 
\end{definition}

\begin{remark}
\label{remark:pasmmn}
Since $\pasm(\nu / \lambda,m,n)$ is constructed as the convex hull of a subset of $m\times n$ partial alternating sign matrices, it is contained in the convex hull of all $m\times n$ partial alternating sign matrices; this is the polytope $\pasm(m,n)$.
\end{remark}

\section{Inequality description}
\label{sec:ineq}
In this section, we prove an inequality description for the polytope $\pasm(\nu/\lambda,m,n)$.

We use the following definition in Theorem~\ref{thm:ineq_desc}.
\begin{definition}
\label{def:border_strip}
A skew partition $\omega / \nu$ is a \emph{border strip} if it is connected  and contains no $2\times 2$ square. We say $\omega / \nu$ is the border strip associated to $\nu$ if $\omega / \nu$ is the smallest border strip containing all squares that are one square either south or east of blocks in $\nu$. See the gray boxes in Figure \ref{fig:5x7example}.
\end{definition}

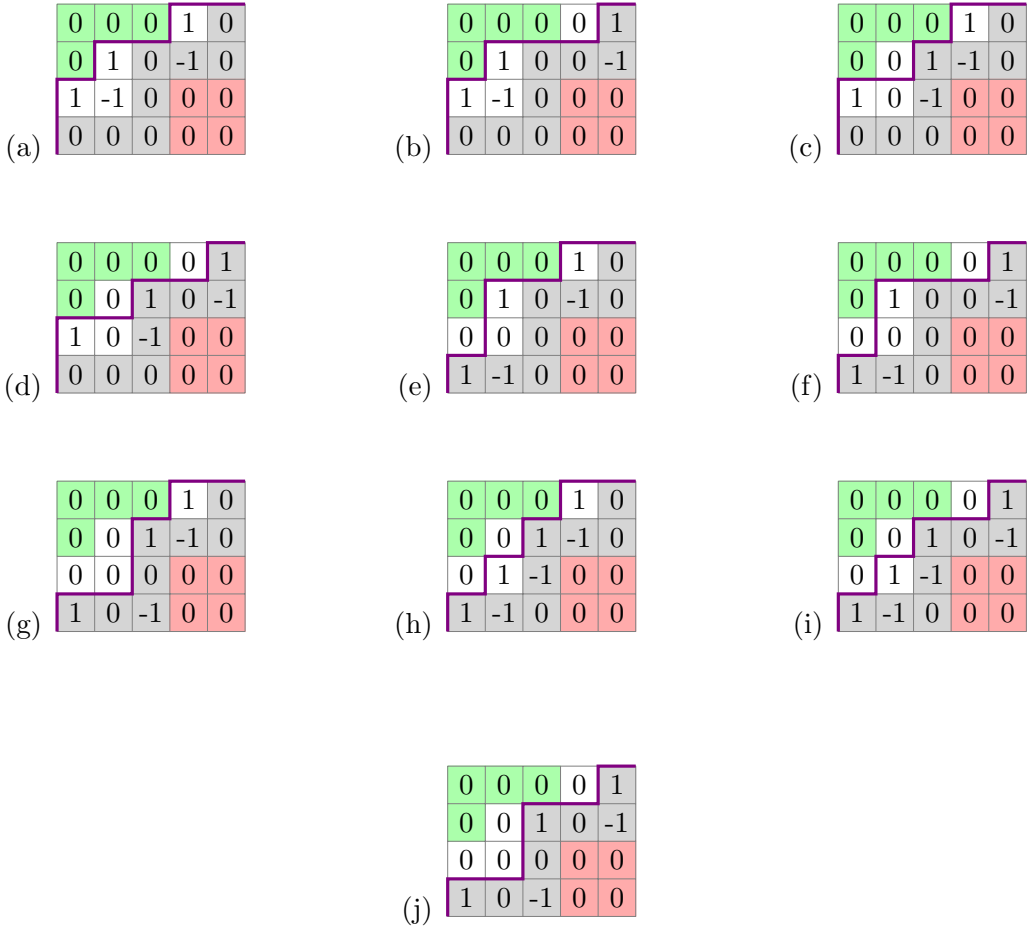
\begin{figure}[ht!]
\begin{enumerate}
\begin{multicols}{3}

\item[(a)]

\begin{tikzpicture}
\begin{scope}[scale=0.5]
    \draw[fill, color=green!33] (0,2) rectangle (1,4);
    \draw[fill, color=green!33] (1,3) rectangle (3,4);
   
      \draw[fill, color=gray!33] (0,0) rectangle (2,1);
    \draw[fill, color=gray!33] (2,0) rectangle (3,3);
    \draw[fill, color=gray!33] (3,2) rectangle (5,3);
    \draw[fill, color=gray!33] (4,3) rectangle (5,4);
     \draw[fill, color=red!33] (3,0) rectangle (5,2);
 
    \draw[very thin, color=gray!100] (0,0) grid (5,4);
    \draw[very thick, violet] (0,0) -- (0,2) -- (1,2) -- (1,3) -- (3,3) -- (3,4) -- (5,4) ;

    \node at (0.5,3.5) {0};
    \node at (1.5,3.5) {0};
    \node at (2.5,3.5) {0};
    \node at (3.5,3.5) {1};
    \node at (4.5,3.5) {0};

    \node at (0.5,2.5) {0};
    \node at (1.5,2.5) {1};
    \node at (2.5,2.5) {0};
    \node at (3.5,2.5) {-1};
    \node at (4.5,2.5) {0};

    \node at (0.5,1.5) {1};
    \node at (1.5,1.5) {-1};
    \node at (2.5,1.5) {0};
    \node at (3.5,1.5) {0};
    \node at (4.5,1.5) {0};

    \node at (0.5,0.5) {0};
    \node at (1.5,0.5) {0};
    \node at (2.5,0.5) {0};
    \node at (3.5,0.5) {0};
    \node at (4.5,0.5) {0};

\end{scope}
\end{tikzpicture}
\vspace{1cm}

\item[(d)]
\begin{tikzpicture}
\begin{scope}[scale=0.5]
    \draw[fill, color=green!33] (0,2) rectangle (1,4);
    \draw[fill, color=green!33] (1,3) rectangle (3,4);
   
    \draw[fill, color=gray!33] (0,0) rectangle (2,1);
    \draw[fill, color=gray!33] (2,0) rectangle (3,3);
    \draw[fill, color=gray!33] (3,2) rectangle (5,3);
    \draw[fill, color=gray!33] (4,3) rectangle (5,4);
     \draw[fill, color=red!33] (3,0) rectangle (5,2);
 
    \draw[very thin, color=gray!100] (0,0) grid (5,4);
    \draw[very thick, violet] (0,0) -- (0,2) -- (2,2) -- (2,3) -- (4,3) -- (4,4) -- (5,4) ;

    \node at (0.5,3.5) {0};
    \node at (1.5,3.5) {0};
    \node at (2.5,3.5) {0};
    \node at (3.5,3.5) {0};
    \node at (4.5,3.5) {1};

    \node at (0.5,2.5) {0};
    \node at (1.5,2.5) {0};
    \node at (2.5,2.5) {1};
    \node at (3.5,2.5) {0};
    \node at (4.5,2.5) {-1};

    \node at (0.5,1.5) {1};
    \node at (1.5,1.5) {0};
    \node at (2.5,1.5) {-1};
    \node at (3.5,1.5) {0};
    \node at (4.5,1.5) {0};

    \node at (0.5,0.5) {0};
    \node at (1.5,0.5) {0};
    \node at (2.5,0.5) {0};
    \node at (3.5,0.5) {0};
    \node at (4.5,0.5) {0};

\end{scope}
\end{tikzpicture}
\vspace{1cm}

\item[(g)]
\begin{tikzpicture}
\begin{scope}[scale=0.5]
    \draw[fill, color=green!33] (0,2) rectangle (1,4);
    \draw[fill, color=green!33] (1,3) rectangle (3,4);
   
     \draw[fill, color=gray!33] (0,0) rectangle (2,1);
    \draw[fill, color=gray!33] (2,0) rectangle (3,3);
    \draw[fill, color=gray!33] (3,2) rectangle (5,3);
    \draw[fill, color=gray!33] (4,3) rectangle (5,4);
     \draw[fill, color=red!33] (3,0) rectangle (5,2);

    \draw[very thin, color=gray!100] (0,0) grid (5,4);
    \draw[very thick, violet] (0,0) -- (0,1) -- (2,1) -- (2,3) -- (3,3) -- (3,4) -- (5,4) ;

    \node at (0.5,3.5) {0};
    \node at (1.5,3.5) {0};
    \node at (2.5,3.5) {0};
    \node at (3.5,3.5) {1};
    \node at (4.5,3.5) {0};

    \node at (0.5,2.5) {0};
    \node at (1.5,2.5) {0};
    \node at (2.5,2.5) {1};
    \node at (3.5,2.5) {-1};
    \node at (4.5,2.5) {0};

    \node at (0.5,1.5) {0};
    \node at (1.5,1.5) {0};
    \node at (2.5,1.5) {0};
    \node at (3.5,1.5) {0};
    \node at (4.5,1.5) {0};

    \node at (0.5,0.5) {1};
    \node at (1.5,0.5) {0};
    \node at (2.5,0.5) {-1};
    \node at (3.5,0.5) {0};
    \node at (4.5,0.5) {0};

\end{scope}
\end{tikzpicture}
\vspace{1cm}

\item[(b)]
\begin{tikzpicture}
\begin{scope}[scale=0.5]
    \draw[fill, color=green!33] (0,2) rectangle (1,4);
    \draw[fill, color=green!33] (1,3) rectangle (3,4);
   
     \draw[fill, color=gray!33] (0,0) rectangle (2,1);
    \draw[fill, color=gray!33] (2,0) rectangle (3,3);
    \draw[fill, color=gray!33] (3,2) rectangle (5,3);
    \draw[fill, color=gray!33] (4,3) rectangle (5,4);
     \draw[fill, color=red!33] (3,0) rectangle (5,2);

    \draw[very thin, color=gray!100] (0,0) grid (5,4);
    \draw[very thick, violet] (0,0) -- (0,2) -- (1,2) -- (1,3) -- (4,3) -- (4,4) -- (5,4) ;

    \node at (0.5,3.5) {0};
    \node at (1.5,3.5) {0};
    \node at (2.5,3.5) {0};
    \node at (3.5,3.5) {0};
    \node at (4.5,3.5) {1};

    \node at (0.5,2.5) {0};
    \node at (1.5,2.5) {1};
    \node at (2.5,2.5) {0};
    \node at (3.5,2.5) {0};
    \node at (4.5,2.5) {-1};

    \node at (0.5,1.5) {1};
    \node at (1.5,1.5) {-1};
    \node at (2.5,1.5) {0};
    \node at (3.5,1.5) {0};
    \node at (4.5,1.5) {0};

    \node at (0.5,0.5) {0};
    \node at (1.5,0.5) {0};
    \node at (2.5,0.5) {0};
    \node at (3.5,0.5) {0};
    \node at (4.5,0.5) {0};

\end{scope}
\end{tikzpicture}
\vspace{1cm}

\item[(e)]
\begin{tikzpicture}
\begin{scope}[scale=0.5]
    \draw[fill, color=green!33] (0,2) rectangle (1,4);
    \draw[fill, color=green!33] (1,3) rectangle (3,4);
   
    \draw[fill, color=gray!33] (0,0) rectangle (2,1);
    \draw[fill, color=gray!33] (2,0) rectangle (3,3);
    \draw[fill, color=gray!33] (3,2) rectangle (5,3);
    \draw[fill, color=gray!33] (4,3) rectangle (5,4);
     \draw[fill, color=red!33] (3,0) rectangle (5,2);
  
    \draw[very thin, color=gray!100] (0,0) grid (5,4);
    \draw[very thick, violet] (0,0) -- (0,1) -- (1,1) -- (1,3) -- (3,3) -- (3,4) -- (5,4) ;

    \node at (0.5,3.5) {0};
    \node at (1.5,3.5) {0};
    \node at (2.5,3.5) {0};
    \node at (3.5,3.5) {1};
    \node at (4.5,3.5) {0};

    \node at (0.5,2.5) {0};
    \node at (1.5,2.5) {1};
    \node at (2.5,2.5) {0};
    \node at (3.5,2.5) {-1};
    \node at (4.5,2.5) {0};

    \node at (0.5,1.5) {0};
    \node at (1.5,1.5) {0};
    \node at (2.5,1.5) {0};
    \node at (3.5,1.5) {0};
    \node at (4.5,1.5) {0};

    \node at (0.5,0.5) {1};
    \node at (1.5,0.5) {-1};
    \node at (2.5,0.5) {0};
    \node at (3.5,0.5) {0};
    \node at (4.5,0.5) {0};

\end{scope}
\end{tikzpicture}
\vspace{1cm}

\item[(h)]
\begin{tikzpicture}
\begin{scope}[scale=0.5]
    \draw[fill, color=green!33] (0,2) rectangle (1,4);
    \draw[fill, color=green!33] (1,3) rectangle (3,4);
   
    \draw[fill, color=gray!33] (0,0) rectangle (2,1);
    \draw[fill, color=gray!33] (2,0) rectangle (3,3);
    \draw[fill, color=gray!33] (3,2) rectangle (5,3);
    \draw[fill, color=gray!33] (4,3) rectangle (5,4);
     \draw[fill, color=red!33] (3,0) rectangle (5,2);
 
    \draw[very thin, color=gray!100] (0,0) grid (5,4);
    \draw[very thick, violet] (0,0) -- (0,1) -- (1,1) -- (1,2) -- (2,2) -- (2,3) -- (3,3) -- (3,4) -- (5,4) ;

    \node at (0.5,3.5) {0};
    \node at (1.5,3.5) {0};
    \node at (2.5,3.5) {0};
    \node at (3.5,3.5) {1};
    \node at (4.5,3.5) {0};

    \node at (0.5,2.5) {0};
    \node at (1.5,2.5) {0};
    \node at (2.5,2.5) {1};
    \node at (3.5,2.5) {-1};
    \node at (4.5,2.5) {0};

    \node at (0.5,1.5) {0};
    \node at (1.5,1.5) {1};
    \node at (2.5,1.5) {-1};
    \node at (3.5,1.5) {0};
    \node at (4.5,1.5) {0};

    \node at (0.5,0.5) {1};
    \node at (1.5,0.5) {-1};
    \node at (2.5,0.5) {0};
    \node at (3.5,0.5) {0};
    \node at (4.5,0.5) {0};

\end{scope}
\end{tikzpicture}

\vspace{1cm}

\item[(c)]
\begin{tikzpicture}
\begin{scope}[scale=0.5]
    \draw[fill, color=green!33] (0,2) rectangle (1,4);
    \draw[fill, color=green!33] (1,3) rectangle (3,4);
   
   \draw[fill, color=gray!33] (0,0) rectangle (2,1);
    \draw[fill, color=gray!33] (2,0) rectangle (3,3);
    \draw[fill, color=gray!33] (3,2) rectangle (5,3);
    \draw[fill, color=gray!33] (4,3) rectangle (5,4);
     \draw[fill, color=red!33] (3,0) rectangle (5,2);
 
    \draw[very thin, color=gray!100] (0,0) grid (5,4);
    \draw[very thick, violet] (0,0) -- (0,2) -- (2,2) -- (2,3) -- (3,3) -- (3,4) -- (5,4) ;

    \node at (0.5,3.5) {0};
    \node at (1.5,3.5) {0};
    \node at (2.5,3.5) {0};
    \node at (3.5,3.5) {1};
    \node at (4.5,3.5) {0};

    \node at (0.5,2.5) {0};
    \node at (1.5,2.5) {0};
    \node at (2.5,2.5) {1};
    \node at (3.5,2.5) {-1};
    \node at (4.5,2.5) {0};

    \node at (0.5,1.5) {1};
    \node at (1.5,1.5) {0};
    \node at (2.5,1.5) {-1};
    \node at (3.5,1.5) {0};
    \node at (4.5,1.5) {0};

    \node at (0.5,0.5) {0};
    \node at (1.5,0.5) {0};
    \node at (2.5,0.5) {0};
    \node at (3.5,0.5) {0};
    \node at (4.5,0.5) {0};

\end{scope}
\end{tikzpicture}
\vspace{1cm}

\item[(f)]
\begin{tikzpicture}
\begin{scope}[scale=0.5]
    \draw[fill, color=green!33] (0,2) rectangle (1,4);
    \draw[fill, color=green!33] (1,3) rectangle (3,4);
   
     \draw[fill, color=gray!33] (0,0) rectangle (2,1);
    \draw[fill, color=gray!33] (2,0) rectangle (3,3);
    \draw[fill, color=gray!33] (3,2) rectangle (5,3);
    \draw[fill, color=gray!33] (4,3) rectangle (5,4);
     \draw[fill, color=red!33] (3,0) rectangle (5,2);
  
    \draw[very thin, color=gray!100] (0,0) grid (5,4);
    \draw[very thick, violet] (0,0) -- (0,1) -- (1,1) -- (1,3) -- (4,3) -- (4,4) -- (5,4) ;

    \node at (0.5,3.5) {0};
    \node at (1.5,3.5) {0};
    \node at (2.5,3.5) {0};
    \node at (3.5,3.5) {0};
    \node at (4.5,3.5) {1};

    \node at (0.5,2.5) {0};
    \node at (1.5,2.5) {1};
    \node at (2.5,2.5) {0};
    \node at (3.5,2.5) {0};
    \node at (4.5,2.5) {-1};

    \node at (0.5,1.5) {0};
    \node at (1.5,1.5) {0};
    \node at (2.5,1.5) {0};
    \node at (3.5,1.5) {0};
    \node at (4.5,1.5) {0};

    \node at (0.5,0.5) {1};
    \node at (1.5,0.5) {-1};
    \node at (2.5,0.5) {0};
    \node at (3.5,0.5) {0};
    \node at (4.5,0.5) {0};

\end{scope}
\end{tikzpicture}
\vspace{1cm}

\item[(i)]
\begin{tikzpicture}
\begin{scope}[scale=0.5]
    \draw[fill, color=green!33] (0,2) rectangle (1,4);
    \draw[fill, color=green!33] (1,3) rectangle (3,4);
   
   \draw[fill, color=gray!33] (0,0) rectangle (2,1);
    \draw[fill, color=gray!33] (2,0) rectangle (3,3);
    \draw[fill, color=gray!33] (3,2) rectangle (5,3);
    \draw[fill, color=gray!33] (4,3) rectangle (5,4);
     \draw[fill, color=red!33] (3,0) rectangle (5,2);

    \draw[very thin, color=gray!100] (0,0) grid (5,4);
    \draw[very thick, violet] (0,0) -- (0,1) -- (1,1) -- (1,2) -- (2,2) -- (2,3) -- (4,3) -- (4,4) -- (5,4) ;

    \node at (0.5,3.5) {0};
    \node at (1.5,3.5) {0};
    \node at (2.5,3.5) {0};
    \node at (3.5,3.5) {0};
    \node at (4.5,3.5) {1};

    \node at (0.5,2.5) {0};
    \node at (1.5,2.5) {0};
    \node at (2.5,2.5) {1};
    \node at (3.5,2.5) {0};
    \node at (4.5,2.5) {-1};

    \node at (0.5,1.5) {0};
    \node at (1.5,1.5) {1};
    \node at (2.5,1.5) {-1};
    \node at (3.5,1.5) {0};
    \node at (4.5,1.5) {0};

    \node at (0.5,0.5) {1};
    \node at (1.5,0.5) {-1};
    \node at (2.5,0.5) {0};
    \node at (3.5,0.5) {0};
    \node at (4.5,0.5) {0};

\end{scope}
\end{tikzpicture}
\end{multicols}
\vspace{.8cm}

\begin{multicols}{3}
\hspace{3.5in}
 \item[(j)]
\begin{tikzpicture}
\begin{scope}[scale=0.5]
    \draw[fill, color=green!33] (0,2) rectangle (1,4);
    \draw[fill, color=green!33] (1,3) rectangle (3,4);
   
   \draw[fill, color=gray!33] (0,0) rectangle (2,1);
    \draw[fill, color=gray!33] (2,0) rectangle (3,3);
    \draw[fill, color=gray!33] (3,2) rectangle (5,3);
    \draw[fill, color=gray!33] (4,3) rectangle (5,4);
     \draw[fill, color=red!33] (3,0) rectangle (5,2);
 
    \draw[very thin, color=gray!100] (0,0) grid (5,4);
    \draw[very thick, violet] (0,0) -- (0,1) -- (2,1) -- (2,3) -- (4,3) -- (4,4) -- (5,4) ;

    \node at (0.5,3.5) {0};
    \node at (1.5,3.5) {0};
    \node at (2.5,3.5) {0};
    \node at (3.5,3.5) {0};
    \node at (4.5,3.5) {1};

    \node at (0.5,2.5) {0};
    \node at (1.5,2.5) {0};
    \node at (2.5,2.5) {1};
    \node at (3.5,2.5) {0};
    \node at (4.5,2.5) {-1};

    \node at (0.5,1.5) {0};
    \node at (1.5,1.5) {0};
    \node at (2.5,1.5) {0};
    \node at (3.5,1.5) {0};
    \node at (4.5,1.5) {0};

    \node at (0.5,0.5) {1};
    \node at (1.5,0.5) {0};
    \node at (2.5,0.5) {-1};
    \node at (3.5,0.5) {0};
    \node at (4.5,0.5) {0};

\end{scope}

\end{tikzpicture}
\hspace{3.5in}

\end{multicols}
\end{enumerate}
\caption{All matrices in $\pasm_{4,5}^{(4,2,2)  /(3,1)}$ with  $\mu$ between $\lambda=(3,1)$ and $\nu=(4,2,2)$, where $\mu$ is in violet. Note: $\lambda$ is green, the border strip associated to $\nu$ is in gray.}
\label{fig:4x5examples}
\end{figure}

\begin{theorem}
\label{thm:ineq_desc}
Let $\lambda,\nu$ be partitions inside the partition $(n-1)^{m-1}$ such that $\lambda\subseteq\nu$. Then $\pasm(\nu/\lambda,m,n)$ is given by the inequalities: 
\begin{itemize}

\item Partial column and partial row sums are between $0$ and $1$:
\begin{equation}
\label{pasmeq2}
0 \leq \displaystyle\sum_{i'=1}^{i} X_{i'j} \leq 1, \text{ for all } 1 \leq i \leq m, 1 \leq j \leq n.
\end{equation}

\begin{equation}
\label{pasmeq3}
0 \leq \displaystyle\sum_{j'=1}^{j} X_{ij'} \leq 1, \text{ for all }  1 \leq i \leq m, 1 \leq j \leq n.
\end{equation}
\item First column and first row sums are $1$: 
\begin{equation}
\label{pasmeq4}
\displaystyle\sum_{i=1}^{m} X_{i1} = 1 \text{ and } \displaystyle\sum_{j=1}^{n} X_{1j} = 1
\end{equation}


\item Other columns and rows sum to $0$:
\begin{equation}
\label{pasmeq6}
\displaystyle\sum_{j=1}^{n} X_{ij} = 0 \text{ for } 2 \leq i \leq m \text{ and } \sum_{i=1}^{m} X_{ij} = 0 \text{ for } 2 \leq j \leq n
\end{equation}

\item The entries of $\lambda$ are fixed to be $0$:
\begin{equation}
\label{pasmeq8}
X_{ij} = 0 \text{ if } j\leq \lambda_{i} \text{ for } 1\leq i\leq n
\end{equation}

\item The entries southeast of the border strip associated to $\nu$ are fixed to be $0$: 
\begin{equation}
\label{pasmeq7}
X_{ij} = 0 \text{ if } \nu_{i-1} < j \leq n \text{ for } i>1, \text{ and } X_{ij} = 0 \text{ if } \nu_1 +1 < j  \leq n
\end{equation}

\end{itemize}
\end{theorem}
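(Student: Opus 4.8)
The plan is to prove the two inclusions between $\pasm(\nu/\lambda,m,n)$ and the polytope $Q$ cut out by the constraints in the statement. The inclusion $\pasm(\nu/\lambda,m,n)\subseteq Q$ is the routine direction: since $Q$ is an intersection of half-spaces and hyperplanes, it suffices to check that every generator $M^{\mu}$ with $\lambda\subseteq\mu\subseteq\nu$ satisfies each constraint. Lemma~\ref{lem:Mmu} immediately gives \eqref{pasmeq2}, \eqref{pasmeq3} (as $M^{\mu}$ is a partial alternating sign matrix, its partial sums lie in $\{0,1\}\subseteq[0,1]$) as well as \eqref{pasmeq4} and \eqref{pasmeq6} (the prescribed row and column sums). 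For the vanishing conditions \eqref{pasmeq8} and \eqref{pasmeq7} I would read off from Definition~\ref{Mmu} that the only nonzero entries of row $i$ are a $+1$ in column $\mu_i+1$ and, for $i\ge 2$, a $-1$ in column $\mu_{i-1}+1$; the inclusions $\lambda\subseteq\mu\subseteq\nu$ then guarantee these entries avoid both the region $j\le\lambda_i$ and the region southeast of the border strip of $\nu$ (Definition~\ref{def:border_strip}), which are exactly the cells that \eqref{pasmeq8} and \eqref{pasmeq7} require to vanish.

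The substantive direction is $Q\subseteq\pasm(\nu/\lambda,m,n)$, and here my plan is a per-vertex argument rather than a direct convex decomposition. Because \eqref{pasmeq2}, \eqref{pasmeq3} coincide with the defining inequalities of $\pasm(m,n)$ in \cite{HeuerStriker}, any $X\in Q$ lies in $\pasm(m,n)$, so I may write $X=\sum_k c_k V_k$ with $c_k>0$, $\sum_k c_k=1$, and each $V_k$ a (vertex) partial alternating sign matrix. The first step is to transfer the equalities \eqref{pasmeq4}, \eqref{pasmeq6} to each $V_k$: since the first row sum of a partial alternating sign matrix is at most $1$ and the convex combination equals $1$, every $V_k$ must itself have first row sum $1$; the same extremality argument applied to the first column and to the lower bound $0$ for the remaining rows and columns forces each $V_k$ to realize all of \eqref{pasmeq4} and \eqref{pasmeq6}.

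Next I would transfer the vanishing conditions. For a cell with $j\le\lambda_i$, the prefix row sum $\sum_{j'\le j}X_{ij'}$ equals $0$ by \eqref{pasmeq8}; writing this as $\sum_k c_k\sum_{j'\le j}(V_k)_{ij'}$ with every summand nonnegative forces each $\sum_{j'\le j}(V_k)_{ij'}$ to vanish, hence $(V_k)_{ij}=0$. An identical argument with partial \emph{column} sums, using that the southeast region is downward-closed in each column and that columns $j\ge 2$ sum to $0$, shows each $V_k$ also vanishes in the region of \eqref{pasmeq7}. At this point every $V_k$ is an integral partial alternating sign matrix satisfying \eqref{pasmeq4}, \eqref{pasmeq6}, \eqref{pasmeq8}, \eqref{pasmeq7}.

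The crux is to identify these $V_k$ as the generators $M^{\mu}$. Here the key observation is that for integral $V_k$ the partial column sums $C_{ij}:=\sum_{i'\le i}(V_k)_{i'j}$ lie in $\{0,1\}$, while $\sum_j C_{ij}=\sum_{i'\le i}(\text{row } i'\text{ sum})=1$ for every $i$ by \eqref{pasmeq4}, \eqref{pasmeq6}. Thus in each row-prefix exactly one column $c_i$ has $C_{i c_i}=1$, so $(V_k)_{ij}=C_{ij}-C_{i-1,j}$ is $+1$ at $c_i$, $-1$ at $c_{i-1}$, and $0$ elsewhere. The prefix row-sum bound \eqref{pasmeq3} forces $+1$ to precede $-1$ in each row, that is $c_1\ge c_2\ge\cdots$, so setting $\mu_i:=c_i-1$ yields a partition with $V_k=M^{\mu}$, and \eqref{pasmeq8}, \eqref{pasmeq7} translate into $\mu_i\ge\lambda_i$ and $\mu_i\le\nu_i$, giving $\lambda\subseteq\mu\subseteq\nu$. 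Hence $X=\sum_k c_k M^{\mu^{(k)}}\in\pasm(\nu/\lambda,m,n)$. I expect the main obstacle to be exactly this last identification: organizing the partial-sum bookkeeping so that the one-entry-per-prefix structure and the monotonicity of $(c_i)$ come out cleanly, together with pinning down the precise eastern boundary of the border strip so that $M^{\nu}$ itself (whose $-1$'s sit in column $\nu_{i-1}+1$) is correctly included rather than excluded by \eqref{pasmeq7}.
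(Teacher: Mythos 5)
Your proposal is correct, and it does strictly more than the paper: the paper's proof of Theorem~\ref{thm:ineq_desc} only establishes the containment $\pasm(\nu/\lambda,m,n)\subseteq\P(\nu/\lambda)$ (checking each constraint on a convex combination of the generators $M^{\mu}$, exactly as in your first paragraph), and never argues the reverse inclusion. Your second and third paragraphs supply that missing direction: you pass to a vertex decomposition inside $\pasm(m,n)$, use extremality of the bounds $0$ and $1$ to push the equalities \eqref{pasmeq4}, \eqref{pasmeq6} and the vanishing conditions \eqref{pasmeq8}, \eqref{pasmeq7} down to each vertex $V_k$, and then use the fact that the partial column sums of an integral vertex lie in $\{0,1\}$ and sum to $1$ over each row-prefix to locate a single column $c_i$ per prefix, whence $V_k=M^{\mu}$ with $\mu_i=c_i-1$ and $\lambda\subseteq\mu\subseteq\nu$. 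That identification step is closely related to the corner-sum map $\Psi$ that the paper only introduces later in Section~\ref{sec:orderflow}, but the paper never applies it to close the inequality description, so your argument is a genuine addition rather than a rederivation. One caution, which you yourself flagged: as literally printed, \eqref{pasmeq7} requires $X_{ij}=0$ for $\nu_{i-1}<j\leq n$ when $i>1$, which would force the $-1$ of $M^{\nu}$ itself (sitting in column $\nu_{i-1}+1$) to vanish and thus break even the easy inclusion; to match the border strip of Definition~\ref{def:border_strip} and the figures, the condition must read $\nu_{i-1}+1<j\leq n$. With that correction your bookkeeping goes through, and in particular your translation of \eqref{pasmeq7} into $\mu_{i-1}\leq\nu_{i-1}$ is the right one.
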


\begin{proof}
Fix $m,n>0$ and partitions $\lambda\subseteq\nu\subset (n-1)^{m-1}$. 

Let $\P(\nu / \lambda,m,n)$ be the polytope with inequality description given by $(\ref{pasmeq2}) - (\ref{pasmeq7})$. We wish to show $\pasm(\nu / \lambda,m,n)$ is contained in $\P(\nu / \lambda,m,n)$. Remark~\ref{remark:pasmmn} notes that $\pasm(\nu / \lambda,m,n)$ is contained in $\pasm(m,n)$. By \cite[Theorem 4.6]{HeuerStriker}, the polytope $\pasm(m,n)$ satisfies (\ref{pasmeq2}) and (\ref{pasmeq3}). Let $X \in \pasm(\nu / \lambda, m,n)$. Then $X$ is a convex combination $X=\displaystyle\sum_{\lambda \subseteq \mu \subseteq \nu}c_{\mu}M^\mu$ of matrices $M^\mu$ with $c_{\mu}>0$ and $\displaystyle\sum_{\lambda \subseteq \mu \subseteq \nu} c_{\mu}=1$. 

Each $M^\mu$ has first row sum $1$, first column sum $1$, and all other row or column sums $0$ by Lemma~\ref{lem:Mmu}.
%
%
Then \[\sum_{j=1}^m X_{1j} = \sum_{j=1}^m \sum_{\lambda \subseteq \mu \subseteq \nu}c_{\mu}M^\mu_{1j}= \sum_{\lambda \subseteq \mu \subseteq \nu}c_{\mu}\sum_{j=1}^m M^\mu_{1j}=\sum_{\lambda \subseteq \mu \subseteq \nu}c_{\mu}\cdot 1=1\]
and
\[\sum_{i=1}^n X_{i1} = \sum_{i=1}^n \sum_{\lambda \subseteq \mu \subseteq \nu}c_{\mu}M^\mu_{i1}= \sum_{\lambda \subseteq \mu \subseteq \nu}c_{\mu}\sum_{i=1}^n M^\mu_{i1}=\sum_{\lambda \subseteq \mu \subseteq \nu}c_{\mu}\cdot 1=1.\]
So (\ref{pasmeq4}) is satisfied.

Similarly, if $1<k\leq m$,
\[\sum_{j=1}^m X_{kj} = \sum_{j=1}^m \sum_{\lambda \subseteq \mu \subseteq \nu}c_{\mu}M^\mu_{kj}= \sum_{\lambda \subseteq \mu \subseteq \nu}c_{\mu}\sum_{j=1}^m M^\mu_{kj}=\sum_{\lambda \subseteq \mu \subseteq \nu}c_{\mu}\cdot 0=0\]
and if $1<k\leq n$,
\[\sum_{i=1}^n X_{ik} = \sum_{i=1}^n \sum_{\lambda \subseteq \mu \subseteq \nu}c_{\mu}M^\mu_{ik}= \sum_{\lambda \subseteq \mu \subseteq \nu}c_{\mu}\sum_{i=1}^n M^\mu_{ik}=\sum_{\lambda \subseteq \mu \subseteq \nu}c_{\mu}\cdot 0=0.\]
So (\ref{pasmeq6}) is satisfied.

Since $\lambda \subseteq \mu$, and the $1$'s and $-1$'s in $M^{\mu}$ have been placed outside of $\mu$, all entries in $\lambda$ are equal to $0$. So (\ref{pasmeq8}) is satisfied. Also, since $\mu \subseteq \nu$ and all $1$'s and $-1$'s in $M^{\mu}$ have been placed at most one box south and/or east of $\mu$, all entries southeast of the border strip associated to $\nu$ are equal to $0$. So (\ref{pasmeq7}) is satisfied. 

Now we wish to show that $\P(\nu / \lambda, m, n)$ is contained in $\pasm(\nu / \lambda,m,n)$. Suppose that $X \in \P(\nu / \lambda, m, n)$. By (3.1) and (3.2), $X$ is also in $\pasm(m,n)$. So we can write it as the convex combination of some partial ASMs: $X=\displaystyle\sum_{k=1}^w c_{k}M^k$ with $c_{k}>0$ and $\displaystyle\sum_{k=1}^w c_{k}=1$.

Suppose one of these partial alternating sign matrices does not satisfy the first equality in (\ref{pasmeq4}). Without loss of generality, say it is $M^1$. Since it is a partial alternating sign matrix that does not have a first column total sum of 1, then it must have a first column sum of 0. 
So \[\displaystyle\sum_{i=1}^{m} X_{i1} = \displaystyle\sum_{i =1}^{m} \displaystyle\sum_{k=1}^w c_{k}M^k_{i1} =   \displaystyle\sum_{k=1}^w c_{k}\displaystyle\sum_{i=1}^{m} M^k_{i1} = c_1 \displaystyle\sum_{i=1}^{m} M^1_{i1} + \displaystyle\sum_{k=2}^{w} c_k \displaystyle\sum_{i=1}^{m} M^k_{i1} = c_1 \cdot 0 + \displaystyle\sum_{k=2}^{w} c_k\cdot 1 < 1.\] This is a contradiction. We can do a similar proof for the second equality in (\ref{pasmeq4}). So each $M^k$ satisfies (\ref{pasmeq4}). 

Now suppose one of these partial alternating sign matrices does not satisfy the first equality in (\ref{pasmeq6})  for a fixed value of $i$ with $2\leq i\leq m$. Without loss of generality, say this matrix is $M^1$. Since this is a partial alternating sign matrix that does not have an $i$th row total sum of 0, then it must have an $i$th row total sum of 1. 
So \[\displaystyle\sum_{j=1}^n X_{ij} = \displaystyle\sum_{j=1}^n  \displaystyle\sum_{k=1}^w c_{k}M^k_{ij} =   \displaystyle\sum_{k=1}^w c_{k}\displaystyle\sum_{j=1}^n  M^k_{ij} = c_1 \displaystyle\sum_{j=1}^n  M^1_{ij} + \displaystyle\sum_{k=2}^{w} c_k \displaystyle\sum_{j=1}^n  M^k_{ij} = c_1 \cdot 1 + \displaystyle\sum_{k=2}^{w} c_k\cdot 0 = c_1 > 0.\] This is a contradiction. We can do a similar proof for the second equality in (\ref{pasmeq6}). So each $M^k$ satisfies (\ref{pasmeq6}).

Now suppose one of these partial alternating sign matrices, call it $M^1$, does not satisfy the  (\ref{pasmeq8})  for some $j<\lambda_i$. Suppose this is the smallest such value of $j$ with this $i$. So this is a partial ASM with $i$th row partial sum $\displaystyle\sum_{j'=1}^j M^1_{ij} = 1$ but $\displaystyle\sum_{j'=1}^{j-1} M^k_{ij} = 0$ for all $k$. But since $X$ satisfies (\ref{pasmeq8}), $\displaystyle\sum_{j'=1}^j X_{ij} = 0$. 
So \[\displaystyle\sum_{j'=1}^j X_{ij} = \displaystyle\sum_{j'=1}^j  \displaystyle\sum_{k=1}^w c_{k}M^k_{ij} =   \displaystyle\sum_{k=1}^w c_{k}\displaystyle\sum_{j'=1}^j  M^k_{ij} =  c_1 \displaystyle\sum_{j'=1}^j  M^1_{ij} + \displaystyle\sum_{k=2}^{w} c_k \displaystyle\sum_{j'=1}^j  M^k_{ij} =  c_1 \cdot 1 + \displaystyle\sum_{k=2}^{w} c_k\cdot 0 = c_1 > 0.\] This is a contradiction. So each $M^k$ satisfies (\ref{pasmeq8}). We can do a similar proof for (\ref{pasmeq7}) by choosing the largest such $j$ rather than the smallest.

Thus we have shown both $\P(\nu / \lambda, m, n)\subseteq \pasm(\nu / \lambda,m,n)$ and $\pasm(\nu / \lambda,m,n)\subseteq \P(\nu / \lambda, m, n)$, verifying these polytopes are equal.
\end{proof}

\section{The $(\nu / \lambda)$-partial ASM polytope is a face of the partial ASM polytope}
\label{sec:face}
In this section, we use the characterization of the face lattice of $\pasm(m,n)$ shown in \cite{HeuerStriker} to show that $\pasm(\nu / \lambda,m,n)$ is a face of $\pasm(m,n)$ in Theorem~\ref{thm:face}. In order to prove this result, we use the following definitions slightly adapted from \cite{HeuerStriker}, which also modify analogous constructs from \cite{SolhjemStriker}.

\begin{definition}
Define the $m \times n$ \emph{grid graph} $\Gamma_{(m,n)}$ as follows.
The vertex set is given as $V(m,n):=\{(i,j)$ : $1 \leq i \leq m+1$, $1 \leq j \leq n+1  \}$.
The edge set is:
\[E(m,n):=  \begin{cases} (i,j) \text{ to } (i+1,j) & 1 \leq i \leq m, 1 \leq j \leq n\\ (i,j) \text{ to } (i,j+1) & 1 \leq i \leq m, 1 \leq j \leq n. \end{cases}\]
The graph is drawn with $j$ increasing to the right and $i$ increasing down to correspond with matrix indexing.

Given an $m \times n$ matrix $X$, we define a labeled graph, $g(X)$, which is a labeling of the edges of $\Gamma_{(m,n)}$. 
The horizontal edges from $(i,j)$ to $(i,j+1)$ are each labeled by the corresponding row partial sum $\displaystyle\sum_{j'=1}^{j} X_{ij'}$ ($1\leq i \leq m, 1 \leq j \leq n$). Similarly, the vertical edges from $(i,j)$ to $(i+1,j)$ are each labeled by the corresponding column partial sum $\displaystyle\sum_{i'=1}^{i} X_{i'j}$ ($1\leq i \leq m, 1 \leq j \leq n$).
\end{definition}
\begin{definition}
Given $M\in\pasm_{m,n}$, we call $g(M)$ the \emph{basic sum-labeling} of $M$. 
Given a collection of partial alternating sign matrices $\mathcal{M}=\{M_1,M_2,\dots,M_r\}\subseteq\pasm_{m,n}$, 
define the \emph{sum-labeling} $g(\mathcal{M})=\displaystyle\bigcup_{i=1}^r g(M_i)$, where the union is defined as the labeling of $\Gamma_{(m,n)}$ whose edge labels are given by the union of the corresponding edge labels of the $g(M_i)$. The set of sum-labelings of $\Gamma_{(m,n)}$ is the set of $g(\mathcal{M})$ over all collections $\mathcal{M}=\{M_1,M_2,\dots,M_r\}\subseteq\pasm_{m,n}$, $(0\leq r)$. 
\end{definition}

\begin{definition}
Given a sum-labeling $\delta$, consider the planar graph $G$ composed of the edges of $\delta$ labeled by the two-element set $\{0,1\}$ (and all incident vertices).
A \emph{region} of $\delta$ is defined as a planar region of $G$, excluding the exterior region. 
Let $\mathcal{R}(\delta)$ denote the number of regions of $\delta$. 
\end{definition}

These definitions relate to the following result on the face lattice of $\pasm(m,n)$.

\begin{figure}
\begin{center}
\includegraphics[scale=.8]{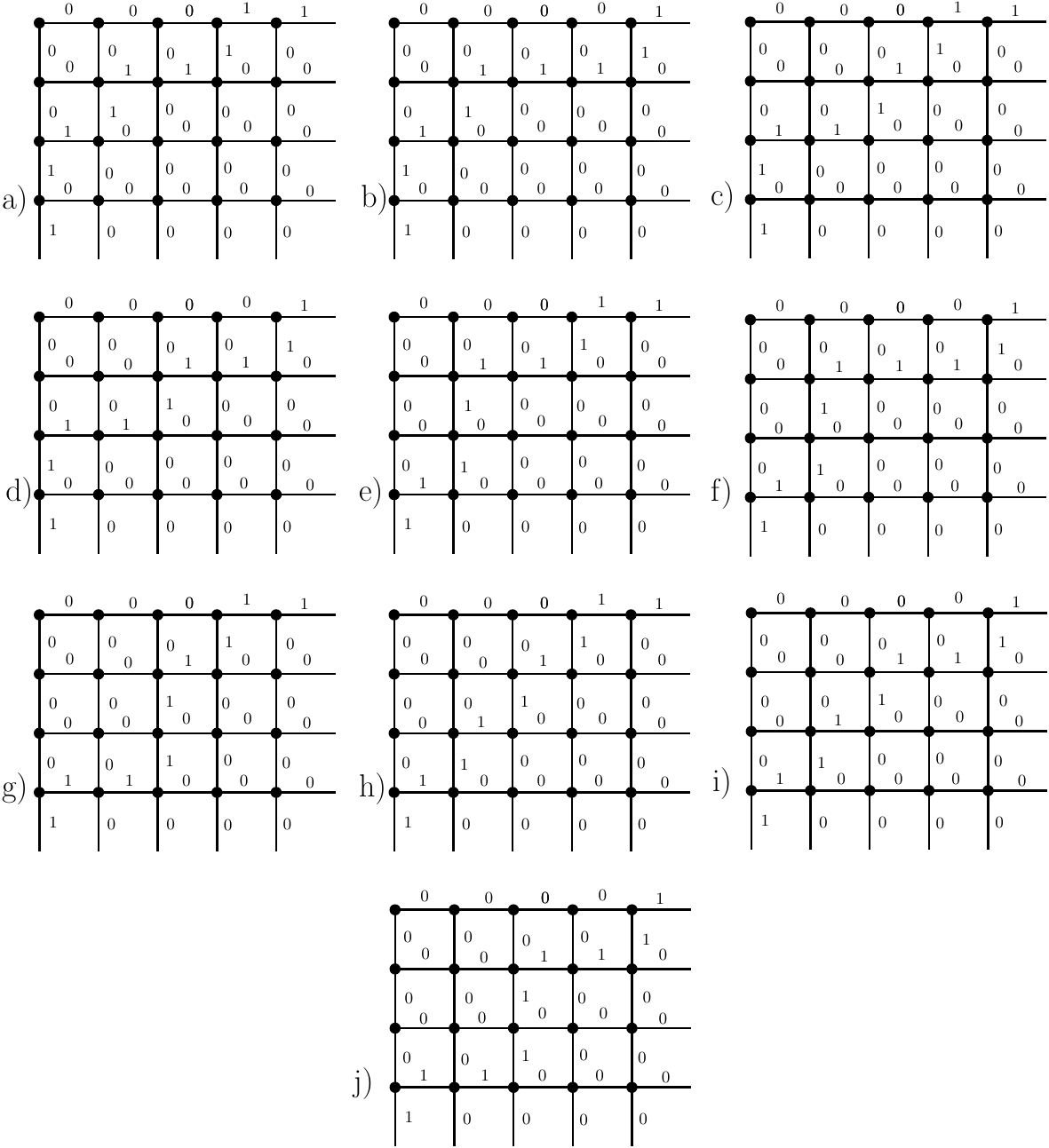}
\end{center}
\caption{Examples of basic sum-labelings, corresponding to the matrices in Figure \ref{fig:4x5examples}, where $\mu$ is between $\lambda=(3,1)$ and $\nu=(4,2,2)$.}
\label{fig:sum_labelings}
\end{figure}

\begin{theorem}[Theorem 4.18 of \cite{HeuerStriker}]
\label{thm:pasm_facelattice}
Let $F$ be a face of $\pasm(m,n)$ and $\mathcal{M}(F)$ be the set of partial alternating sign matrices that are vertices of $F$. The map $\psi:F\mapsto g(\mathcal{M}(F))$ induces an isomorphism between the face lattice of $\pasm(m,n)$ and the set of sum-labelings of ${\Gamma}_{(m,n)}$ ordered by containment. Moreover, 
$\mathrm{dim}(F)= \mathcal{R}(\psi(F))$.
\end{theorem}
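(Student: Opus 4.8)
The plan is to prove the statement in two logically separate pieces: that $\psi$ is an isomorphism of lattices, and that $\dim(F) = \mathcal{R}(\psi(F))$. Throughout I would write $s_e$ for the affine functional on $\mathbb{R}^{mn}$ given by the partial row or column sum attached to an edge $e$ of $\Gamma_{(m,n)}$, so that $g(M)(e) = s_e(M)$. Two facts form the backbone. First, a partial alternating sign matrix is recovered from its basic sum-labeling by taking successive differences of edge labels along each row and column, so $M \mapsto g(M)$ is a bijection from $\pasm_{m,n}$ onto the basic sum-labelings; moreover the matrices in $\pasm_{m,n}$ are precisely the vertices of $\pasm(m,n)$ \cite{HeuerStriker}. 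Second, by the inequality description \cite[Theorem 4.6]{HeuerStriker}, $\pasm(m,n)$ is cut out precisely by the box constraints $0 \le s_e \le 1$, and each $s_e$ takes values in $\{0,1\}$ at every vertex and in $[0,1]$ on all of $\pasm(m,n)$.

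First I would establish that $\psi$ is a well-defined, order-preserving injection. Given a face $F$ with vertex set $\mathcal{M}(F)$, the edge label $\psi(F)(e) = \bigcup_{v \in \mathcal{M}(F)} \{s_e(v)\}$ is the set of values $s_e$ attains at the vertices of $F$; since $s_e$ is affine and $F = \mathrm{conv}(\mathcal{M}(F))$, this set is $\{0\}$, $\{1\}$, or $\{0,1\}$ according as $s_e \equiv 0$ on $F$, $s_e \equiv 1$ on $F$, or $s_e$ is non-constant on $F$. Thus $\psi(F)$ is a sum-labeling recording exactly which box constraints are tight on $F$ and at which bound. Order-preservation in one direction is immediate, since $F \subseteq F'$ forces $\mathcal{M}(F) \subseteq \mathcal{M}(F')$ and hence $\psi(F)(e) \subseteq \psi(F')(e)$ edgewise. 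For injectivity I would invoke the standard fact that a face equals the subset of the polytope on which its tight defining inequalities become equalities; because the defining inequalities of $\pasm(m,n)$ are exactly the $0 \le s_e \le 1$, the labeling $\psi(F)$ determines this tight set and therefore $F$ itself.

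Next I would prove surjectivity together with order-preservation of the inverse, upgrading the injection to a lattice isomorphism. Given a sum-labeling $\delta = g(\mathcal{M})$, set $F_\delta := \{X \in \pasm(m,n) : s_e(X) = 0 \text{ when } \delta(e) = \{0\}, \ s_e(X) = 1 \text{ when } \delta(e) = \{1\}\}$. As the intersection of $\pasm(m,n)$ with supporting hyperplanes of valid inequalities, $F_\delta$ is a face, and its vertices are exactly the partial alternating sign matrices compatible with $\delta$. Because $\delta$ is a union of basic labelings, for every edge $e$ with $\delta(e) = \{0,1\}$ there are matrices in $\mathcal{M} \subseteq \mathcal{M}(F_\delta)$ realizing both values $0$ and $1$; hence $\psi(F_\delta) = \delta$. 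This shows $\psi$ is onto, and since containment of sum-labelings translates directly into containment of the defining equality sets, $\psi^{-1}$ is order-preserving as well, completing the lattice isomorphism.

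The remaining, and most delicate, point is the dimension formula. Writing $\delta = \psi(F)$, the affine hull of $F$ is the solution set of the tight equalities, so $\dim(F)$ equals the dimension of $T_\delta = \{D \in \mathbb{R}^{mn} : s_e(D) = 0 \text{ for every } e \text{ with } \delta(e) \text{ a singleton}\}$. I would encode a matrix $D$ by its corner-sum potential $H_{ij} = \sum_{i' < i,\, j' < j} D_{i'j'}$ on the vertices of $\Gamma_{(m,n)}$, which vanishes on the top row and left column of vertices and gives a linear bijection between $m \times n$ matrices and such potentials. A short computation rewrites each partial-sum functional as a difference of $H$ across the perpendicular edge through one of its endpoints, so the vanishing of $s_e(D)$ on a singleton edge becomes the condition that $H$ take equal values at the two ends of a specific edge of $\Gamma_{(m,n)}$. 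Hence $T_\delta$ is identified with the space of potentials that are locally constant across all these forced-equal edges and zero on the normalizing boundary, and its dimension is the number of independent free values. The crux is to match this count with $\mathcal{R}(\delta)$, the number of bounded planar regions of the doubled-edge graph $G$: I would do this by a planar-duality and Euler-characteristic argument, showing the forced-equality edges form the planar dual of $G$ and that independent free potential values correspond bijectively to bounded regions of $G$. This duality bookkeeping — carefully tracking the boundary normalization and degenerate regions — is the step I expect to demand the most care; everything before it is formal once the inequality description and the matrix–labeling bijection are in hand.
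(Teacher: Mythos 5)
First, a framing point: this paper does not prove the statement at all --- it is imported verbatim as \cite[Theorem 4.18]{HeuerStriker} --- so your attempt can only be measured against that source, not against an in-paper argument. Your lattice-isomorphism half is correct and essentially complete modulo the facts you cite: since the inequality description of $\pasm(m,n)$ consists exactly of the constraints $0\le s_e\le 1$ and the vertices are the (integral) partial alternating sign matrices, the label $\psi(F)(e)$ is $\{0\}$, $\{1\}$, or $\{0,1\}$ according to whether $s_e$ is tight at $0$, tight at $1$, or non-constant on $F$; the tight set determines $F$ by standard polyhedral theory; and your $F_\delta$ argument for surjectivity (using $\mathcal{M}\subseteq\mathcal{M}(F_\delta)$ to see every doubly-labeled edge of $\delta$ is realized) is sound. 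This is the expected route and matches the structure of the treatment in \cite{HeuerStriker}.

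The genuine gap is the dimension formula, and it sits exactly where you flagged it: ``I would do this by a planar-duality and Euler-characteristic argument'' is a plan, not an argument, and it is the only nontrivial claim in the theorem. Your reduction is the right one --- $\dim F$ equals the number of connected components, avoiding the zero-normalized northwest boundary, of the graph on grid vertices whose edges are the perpendicular shifts of the singleton-labeled edges; indeed the four sides of a unit-cell region of $G$ are precisely the perps of the four grid edges meeting that cell's southeast corner, so that corner's corner-sum value is one free parameter (the same change of variables as $\Psi$ in Section 5 of this paper). But the asserted bijection between free components and bounded regions is exactly where the boundary conventions bite, and against the definitions as transcribed in this paper it is \emph{false}: $\Gamma_{(m,n)}$ as defined here has no horizontal edges in row $m+1$ and no vertical edges in column $n+1$, so for the top face $F=\pasm(m,n)$ every labeled edge is $\{0,1\}$ and $G=\Gamma_{(m,n)}$ has only $(m-1)(n-1)$ bounded regions, while $\dim\pasm(m,n)=mn$ (already for $m=n=1$: dimension $1$ versus $0$ regions). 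So your duality bookkeeping cannot close verbatim; it must be run with the original conventions of \cite{HeuerStriker}, whose graph and region count handle the south and east boundary differently (this paper says its definitions are ``slightly adapted,'' and the adaptation is harmless only for faces with $\nu\subseteq(n-1)^{m-1}$, which is all this paper uses). In short: two-thirds of the proposal is solid and standard, but the crux --- the region count, including the boundary cases you yourself anticipated --- remains unproven, and its literal target needs the source's definitions to even be true.
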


Using the characterization of faces of $\pasm(m,n)$ as sum-labelings in the above theorem, we show that $\pasm(\nu / \lambda,m,n)$ is a face of $\pasm(m,n)$.

\begin{theorem}
\label{thm:face}
Given $\lambda \subseteq \nu \subseteq (n-1)^{m-1}$. Then $\pasm(\nu / \lambda,m,n)$ is a face of $\pasm(m,n)$ of dimension $|\nu|-|\lambda|$.
\end{theorem}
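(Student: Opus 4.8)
The plan is to exhibit $\pasm(\nu/\lambda,m,n)$ as the image under the face-lattice isomorphism $\psi$ of Theorem~\ref{thm:pasm_facelattice}. Concretely, I would first identify the set $\mathcal{M}$ of vertices of the candidate face: by construction the matrices $M^{\mu}$ with $\lambda\subseteq\mu\subseteq\nu$ are precisely the vertices of $\pasm(\nu/\lambda,m,n)$, so I would take $\mathcal{M}=\pasm^{\nu/\lambda}_{m,n}$ and compute the sum-labeling $g(\mathcal{M})=\bigcup_{\lambda\subseteq\mu\subseteq\nu}g(M^{\mu})$ of the grid graph $\Gamma_{(m,n)}$. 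The key structural observation is that each $M^{\mu}$ is a partial ASM by Lemma~\ref{lem:Mmu}, so each $g(M^{\mu})$ is a genuine basic sum-labeling, and hence $g(\mathcal{M})$ is a sum-labeling; by Theorem~\ref{thm:pasm_facelattice} every sum-labeling corresponds to a face, and I would argue that the face $F:=\psi^{-1}(g(\mathcal{M}))$ has exactly the $M^{\mu}$ as its vertices, so that $F=\pasm(\nu/\lambda,m,n)$.

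The heart of the argument is a careful description of $g(\mathcal{M})$ as an explicit labeling of $\Gamma_{(m,n)}$, read off from the inequality description in Theorem~\ref{thm:ineq_desc}. I would trace the two lattice paths bounding $\lambda$ and $\nu$ (the green and violet paths in Figure~\ref{fig:4x5examples}): outside $\nu$ (southeast of the border strip) and inside $\lambda$ every $M^{\mu}$ has forced partial sums, so every edge there receives a single fixed label in $g(\mathcal{M})$; the only edges that acquire the two-element label $\{0,1\}$ are those strictly inside the skew region $\nu/\lambda$, where different choices of $\mu$ produce differing partial sums. Thus the planar graph $G$ cut out by the $\{0,1\}$-labeled edges lives precisely in the skew shape $\nu/\lambda$, and counting its bounded regions gives $\mathcal{R}(g(\mathcal{M}))=|\nu|-|\lambda|$, one region per cell of the skew diagram. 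Theorem~\ref{thm:pasm_facelattice} then gives $\dim(F)=\mathcal{R}(\psi(F))=|\nu|-|\lambda|$, which is the claimed dimension.

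I expect the main obstacle to be the bookkeeping in showing that the $\{0,1\}$-labeled edges of $g(\mathcal{M})$ are exactly the interior edges of the skew diagram $\nu/\lambda$, and that they enclose exactly $|\nu|-|\lambda|$ bounded regions. This requires verifying two containments: first, that any edge inside $\nu/\lambda$ really does receive both labels $0$ and $1$ as $\mu$ ranges over $\lambda\subseteq\mu\subseteq\nu$ (which amounts to exhibiting, for a given interior edge, two partitions $\mu,\mu'$ in the interval whose corresponding partial sums differ across that edge); and second, that no edge outside this region is doubly labeled (handled by the forced-zero conditions \eqref{pasmeq8} and \eqref{pasmeq7}). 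Both directions follow from the case analysis in Definition~\ref{Mmu} together with Lemma~\ref{lem:Mmu}, and I would organize them around the row-by-row movement of the single $1$ (and accompanying $-1$) in each $M^{\mu}$ within the band $\nu/\lambda$. Once the shape of $g(\mathcal{M})$ is pinned down, identifying $F$ with $\pasm(\nu/\lambda,m,n)$ and reading off the dimension are immediate from Theorem~\ref{thm:pasm_facelattice}.
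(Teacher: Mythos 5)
Your proposal follows essentially the same route as the paper's proof: both invoke Theorem~\ref{thm:pasm_facelattice}, describe the sum-labeling $g\bigl(\pasm^{\nu/\lambda}_{m,n}\bigr)$ explicitly (the paper does this via the outline edge sets $E_\mu$, giving the edges of $\bigl(\bigcup_{\lambda\subseteq\mu\subseteq\nu}E_\mu\bigr)\setminus(E_\lambda\cap E_\nu)$ the label $\{0,1\}$), and count its regions as the $|\nu|-|\lambda|$ cells of the skew shape. The bookkeeping you flag as the main obstacle is exactly what the paper carries out, by observing that the partial row and column sums of every matrix in $\pasm^{\nu/\lambda}_{m,n}$ are forced to $0$ on $\lambda$, forced to their final values southeast of the border strip of $\nu$, and free to be $0$ or $1$ in between.
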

\begin{proof}
By Theorem \ref{thm:pasm_facelattice}, the faces of $\pasm(m,n)$ are in bijection with sum-labelings of $\Gamma_{(m,n)}$. Since, by Definition~\ref{def:poly_nu_lambda}, $\pasm(\nu / \lambda,m,n)$ is the convex hull of the partial alternating sign matrices $\pasm^{\nu / \lambda}_{m,n}$, we need only show there exists a sum-labeling whose contained basic sum-labelings correspond exactly to these partial alternating sign matrices. We give this sum-labeling explicitly.




Given a partition $\mu\subset (n-1)^{m-1}$, define $E_\mu$ to be the edge set which ``outlines'' $\mu$ in $\Gamma_{m,n}$.
Horizontal edges in $E_\mu$ are given as: $(i, j)$ to $(i, j+1)$ for $1 \leq i \leq m$, $\mu_i + 1 \leq j \leq \mu_{i-1}$, where we take $\mu_0=n$.
Vertical edges in $E_\mu$ are given as: $(i, \mu_i +1)$ to $(i+1, \mu_i +1)$ for $1 \leq i \leq m$.

To create the desired sum-labeling, label the edges in $\displaystyle\left(\bigcup_{\lambda \subseteq \mu \subseteq \nu} E_\mu \right) - (E_\lambda \cap E_\nu)$  with  $\{0,1\}$ (the blue lines in Figure~\ref{fig:4x5unionsums}). Label the edges in the intersection $E_\lambda \cap E_\nu$ with $1$. Label all remaining edges of $\Gamma_{(m,n)}$ with $0$. See Example~\ref{Emu_example} for an example of an explicit construction of $E_\lambda$ and $E_\nu$.



We justify that this is the correct sum-labeling as follows. 
For every $M \in \pasm_{\nu / \lambda}$, the entries in $M$ corresponding to $\lambda$ and those outside the border strip associated to $\nu$ are fixed to be zero. Since the first row must sum to one, we have that the partial row sum for the first row will always be $0$ for the first $\lambda_1$ positions, $1$ for the last $n-\nu_1$ positions, and could be either $0$ or $1$ for those positions in between. 
Since all rows except the first must sum to $0$, we have that these partial row sums must be $0$ for the first $\lambda_i$ entries of row $i$ $(i>1)$ as well as the last $n-\nu_{i-1}$ entries, and may be $0$ or $1$ in between. 
A similar argument can be made for the partial columns sums. The union of all of the corresponding basic sum-labelings for these matrices results in the sum-labeling described above. See Figures~\ref{fig:sum_labelings} and \ref{fig:4x5unionsums} for an example.

By construction, this sum-labeling has $|\nu|-|\lambda|$ regions (exactly the boxes which correspond to $\nu / \lambda$ in $\Gamma_{(m,n)}$). Thus by Theorem \ref{thm:pasm_facelattice}, the dimension of $\pasm(\nu / \lambda,m,n)$ equals $|\nu|-|\lambda|$.
\end{proof}

\begin{example}
\label{Emu_example}
In Figure~\ref{fig:4x5unionsums}, we have constructed the basic sum-labeling corresponding to $\pasm_{\nu / \lambda}$ where $\lambda=(3,1)$ and $\nu = (4,2,2)$.
We show this construction as follows; see Figure~\ref{fig:4x5elamenu}.

Horizontal edges in $E_\lambda$ are $(1,4)$ to $(1,5)$, $(1,5)$ to $(1,6)$, $(2,2)$ to $(2,3)$, $(2,3)$ to $(2,4)$, $(3,1)$ to $(3,2)$. Note that there is no horizontal edge in row 4 because there are no $j$ such that $\lambda_4+1 \leq j \leq \lambda_3$ (since this gives $1 \leq j \leq 0$).
Vertical edges in $E_\lambda$ are $(1,4)$ to $(2,4)$, $(2,2)$ to $(3,2)$, $(3,1)$ to $(4,1)$, and $(4,1)$ to $(5,1)$. Note that we are taking $\lambda_3 = \lambda_4 = 0$.

Horizontal edges in $E_\nu$ are $(1,5)$ to $(1,6)$, $(2,3)$ to $(2,4)$, $(2,4)$ to $(2,5)$, $(4,1)$ to $(4,2)$, and $(4,2)$ to $(4,3)$. Note that there is no horizontal edge in row 3 because there are no $j$ such that $\nu_3+1 \leq j \leq \nu_2$ (since this gives $3 \leq j \leq 2$).
Vertical edges in $E_\nu$ are $(1,5)$ to $(2,5)$, $(2,3)$ to $(3,3)$, $(3,3)$ to $(4,3)$, and $(4,1)$ to $(5,1)$. Again, note that we take $\nu_4 = 0$.

So then $E_\lambda \cap E_\nu$ consists of the edges $(4,1)$ to $(5,1)$, $(2,3)$ to $(2,4)$, and $(1,5)$ to $(1,6)$. These will have label $1$. The remaining edges in $E_\lambda$ and $E_\nu$ have label $\{0,1\}$ and form the ``outline'' of the boxes between them (which gets ``filled in'' with the $\mu$ between $\lambda$ and $\nu$).
Since $|\lambda|=4$ and $|\nu|=8$, we see that the dimension of the face $\pasm(\nu / \lambda,4,5)$ of $\pasm(4,5)$ is of dimension $|\nu|-|\lambda|=8-4=4$, the number of regions in Figure~\ref{fig:4x5unionsums}.
\end{example}

\begin{figure}
\includegraphics[scale=.8]{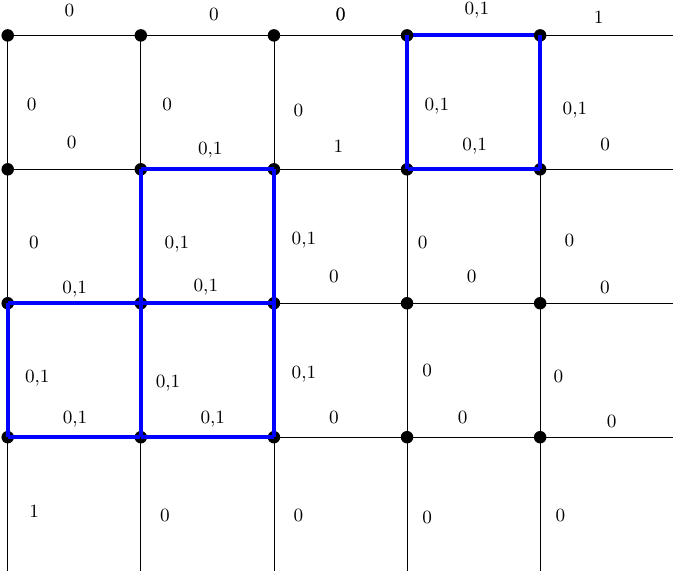}
\caption{The union of the sum-labelings in Figure \ref{fig:4x5examples}, with  $\mu$ between $\lambda=(3,1)$ and $\nu=(4,2,2)$.}
\label{fig:4x5unionsums}
\end{figure}

\begin{figure}[htbp]
\includegraphics[scale=.65]{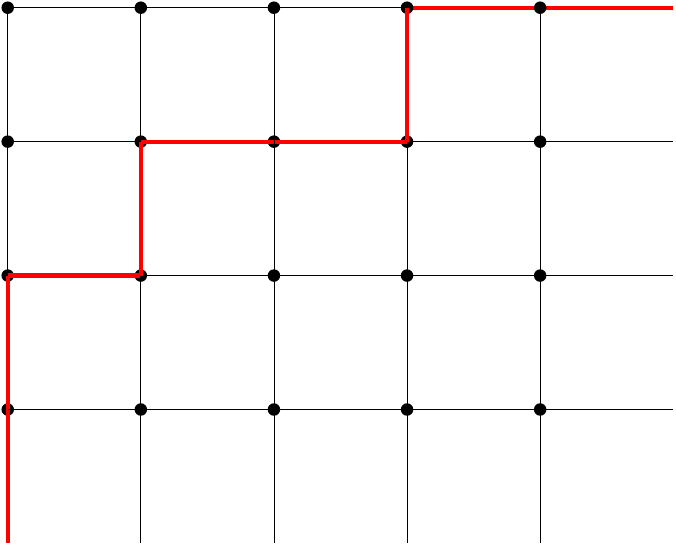} \hspace{.2in}\includegraphics[scale=.65]{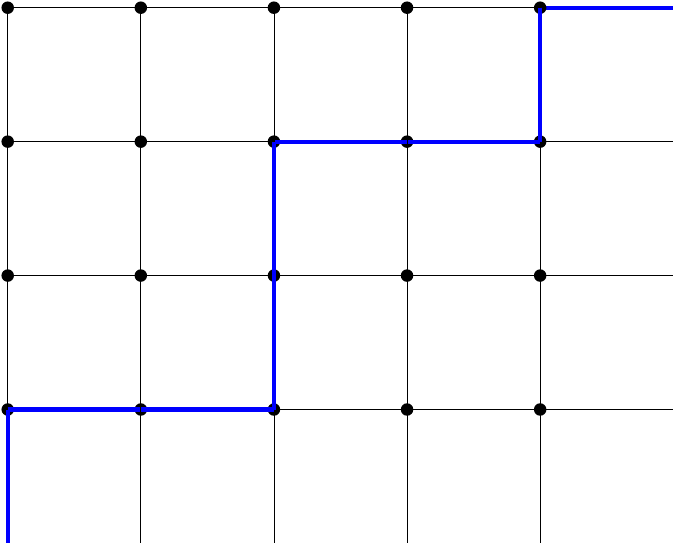}

\caption{Left: $E_{(3,1)}$ is highlighted in red. Right: $E_{(4,2,2)}$ is highlighted in blue.}
\label{fig:4x5elamenu}
\end{figure}

\section{The $(\nu / \lambda)$-partial ASM polytope is an order polytope and flow polytope}
\label{sec:orderflow}
In this section, we show $\pasm(\nu / \lambda,m,n)$ is an order polytope of a strongly planar graph, and so is consequently a flow polytope. We also show how this polytope relates to the ASM-CRY polytope.

We will need the following definitions for our main result, Theorem~\ref{thm:int_equiv}.

\begin{definition}[\cite{Stop}]
The \emph{order polytope} $\mathcal{O}(P)$ of a poset $P$ with
elements $\{p_1,p_2,\ldots,p_d\}$  is the set of points
$(z_1,z_2,\ldots,z_d)$ in $\mathbb{R}^d$ with $0\leq z_i\leq 1$ and if
$p_i \leq p_j$ in $P$ then $z_i \leq z_j$. We identify each point
$(z_1,z_2,\ldots,z_d)$ of  $\mathcal{O}(P)$ with the function $f:P\to
\mathbb{R}$ where $f(p_i)=z_i$. 
\end{definition}

\begin{definition}
Given a matrix $(M_{ij})_{1\leq i \leq m, 1 \leq j \leq n}$, define the \emph{(northwest) corner sum matrix} $(c_{ij})_{1\leq i \leq m, 1 \leq j \leq n}$ by \[ c_{ij} = \sum_{1 \leq i' \leq i, 1 \leq j' \leq j} M_{i',j'}.\]
\end{definition}

\begin{definition}
Given a  partition $\nu$, let $P(\nu)$ denote the poset of shape $\nu$, where each box $(i,j)$ of $\nu$ is a poset element, the upper left box $(1,1)$ of $\nu$ is the minimal element of $P(\nu)$, and the partial order is given as $(i,j)\leq (i',j')$ if and only if $i\leq i'$ and $j\leq j'$. Define $P(\nu / \lambda)$ analogously by deleting the elements of $P(\nu)$ corresponding to $\lambda$.
\end{definition}

We use the corner sum map to construct a map from the polytope $\pasm(\nu / \lambda,m,n)$ to the order polytope  $\mathcal{O}(P(\nu / \lambda))$, which we show in Theorem~\ref{thm:int_equiv} is an integral equivalence. 
For $S\subseteq\mathbb{R}$, let $\mathcal{A}(P(\nu / \lambda), S)$ be the set of functions $g:P(\nu / \lambda)\rightarrow S$. In this notation, the order polytope $\mathcal{O}(P(\nu / \lambda))$ is a subset of $\mathcal{A}(P(\nu / \lambda), [0,1])$. Define $\Psi: \pasm(\nu / \lambda,m,n) \rightarrow \mathcal{A}(P(\nu / \lambda,m,n), \mathbb{R})$ by $a\rightarrow g_a$ where
$g_a(i, j) = c_{ij}$. See Figure \ref{fig:posetexample} for an example.

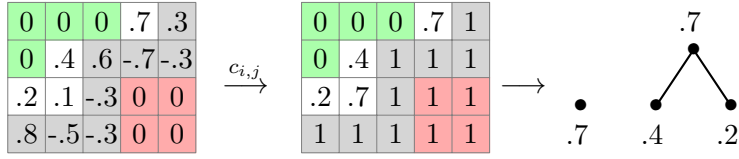
\begin{figure}[htbp]
\begin{tikzpicture}
\begin{scope}[scale=0.5]
    \draw[fill, color=green!33] (0,2) rectangle (1,4);
    \draw[fill, color=green!33] (1,3) rectangle (3,4);
   
    \draw[fill, color=gray!33] (0,0) rectangle (2,1);
    \draw[fill, color=gray!33] (2,0) rectangle (3,3);
    \draw[fill, color=gray!33] (3,2) rectangle (5,3);
    \draw[fill, color=gray!33] (4,3) rectangle (5,4);
    
    \draw[fill, color=red!33] (3,0) rectangle (5,2);
    \draw[very thin, color=gray!100] (0,0) grid (5,4);

    \node at (0.5,3.5) {0};
    \node at (1.5,3.5) {0};
    \node at (2.5,3.5) {0};
    \node at (3.5,3.5) {.7};
    \node at (4.5,3.5) {.3};

    \node at (0.5,2.5) {0};
    \node at (1.5,2.5) {.4};
    \node at (2.5,2.5) {.6};
    \node at (3.5,2.5) {-.7};
    \node at (4.5,2.5) {-.3};

    \node at (0.5,1.5) {.2};
    \node at (1.5,1.5) {.1};
    \node at (2.5,1.5) {-.3};
    \node at (3.5,1.5) {0};
    \node at (4.5,1.5) {0};

    \node at (0.5,0.5) {.8};
    \node at (1.5,0.5) {-.5};
    \node at (2.5,0.5) {-.3};
    \node at (3.5,0.5) {0};
    \node at (4.5,0.5) {0};

\end{scope}
\end{tikzpicture}
\raisebox{.8cm}{
$\overset{c_{i,j}}{\longrightarrow}$} \hspace{.06in}
\begin{tikzpicture}
\begin{scope}[scale=0.5]
    \draw[fill, color=green!33] (0,2) rectangle (1,4);
    \draw[fill, color=green!33] (1,3) rectangle (3,4);
   
    \draw[fill, color=gray!33] (0,0) rectangle (2,1);
    \draw[fill, color=gray!33] (2,0) rectangle (3,3);
    \draw[fill, color=gray!33] (3,2) rectangle (5,3);
    \draw[fill, color=gray!33] (4,3) rectangle (5,4);
    
    \draw[fill, color=red!33] (3,0) rectangle (5,2);
    \draw[very thin, color=gray!100] (0,0) grid (5,4);
      
    \node at (0.5,3.5) {0};
    \node at (1.5,3.5) {0};
    \node at (2.5,3.5) {0};
    \node at (3.5,3.5) {.7};
    \node at (4.5,3.5) {1};
   
    \node at (0.5,2.5) {0};
    \node at (1.5,2.5) {.4};
    \node at (2.5,2.5) {1};
    \node at (3.5,2.5) {1};
    \node at (4.5,2.5) {1};
    
    \node at (0.5,1.5) {.2};
    \node at (1.5,1.5) {.7};
    \node at (2.5,1.5) {1};
    \node at (3.5,1.5) {1};
    \node at (4.5,1.5) {1};
    
    \node at (0.5,0.5) {1};
    \node at (1.5,0.5) {1};
    \node at (2.5,0.5) {1};
    \node at (3.5,0.5) {1};
    \node at (4.5,0.5) {1};
    
\end{scope}
\end{tikzpicture}
\raisebox{.8cm}{$\longrightarrow$}
\begin{tikzpicture}
\begin{scope}[scale=0.5]
    \node at (0,0) {\textbullet};
    \node at (2,0) {\textbullet};
    \node at (4,0) {\textbullet};
    \node at (3,1.5) {\textbullet};
    
    \draw[thick, black] (2,0) -- (3,1.5) -- (4,0);
    
    \node at (-0.1,-0.75) {.7};
    \node at (1.9,-0.75) {.4};
    \node at (3.9,-0.75) {.2};
    \node at (2.9,2.25) {.7};
\end{scope}
\end{tikzpicture}

\caption{The map $\Psi$ from a point in the polytope $\pasm(\nu / \lambda,4,5)$ to a point in the order polytope  $\mathcal{O}(P(\nu / \lambda))$ for $\lambda = (3,1)$ and $\nu = (4,2,2)$.
The first step replaces each entry by the northwest corner sum. The second step labels the poset $P(\nu / \lambda)$ with the corresponding values.}
\label{fig:posetexample}
\end{figure}

Recall integer polytopes 
$\mathcal{P} \subset \mathbb{R}^d$ and $\mathcal{Q} \subset \mathbb{R}^r$ are integrally equivalent if there is an affine transformation
$f : \mathbb{R}^d \rightarrow \mathbb{R}^r$ such that $f$ maps $P$ bijectively onto $Q$ and $f$ maps $\mathbb{Z}^d \cap \aff(P)$ bijectively
onto $\mathbb{Z}^r \cap \aff(Q)$, where $\aff$ denotes affine span. Note that integrally equivalent polytopes have not only  isomorphic face lattices (combinatorial equivalence), but also the same volume and Ehrhart polynomial. 

Our main theorem is below.
\begin{theorem}
\label{thm:int_equiv}
The $(\nu / \lambda)$-partial ASM polytope $\pasm(\nu / \lambda,m,n)$ is integrally equivalent to the order polytope $\mathcal{O}(P(\nu / \lambda))$.
\end{theorem}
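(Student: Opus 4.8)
The plan is to show that the linear corner-sum map $\Psi$ furnishes the required affine isomorphism. First I would record that $\Psi$ is the restriction of a linear map: each coordinate $g_a(i,j)=c_{ij}=\sum_{i'\le i,\,j'\le j}a_{i'j'}$ is linear in the matrix entries, and the inverse relation $a_{ij}=c_{ij}-c_{i-1,j}-c_{i,j-1}+c_{i-1,j-1}$ (with out-of-range corner sums read as $0$) recovers the entries linearly from the corner sums. The heart of the argument is a single computation: for a vertex $M^{\mu}$ with $\lambda\subseteq\mu\subseteq\nu$, the corner sum is the indicator of the complement of $\mu$, that is, $c_{ij}(M^{\mu})=1$ if $(i,j)\notin\mu$ and $0$ otherwise. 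I would prove this by the same row-by-row telescoping used in Lemma~\ref{lem:Mmu}: in row $i$ the nonzero entries of $M^{\mu}$ are a $+1$ and a (possibly absent) $-1$ placed just past columns $\mu_i$ and $\mu_{i-1}$, so that the partial row sums combined with the inductive hypothesis $c_{i-1,j}=[\,j>\mu_{i-1}\,]$ collapse to $c_{ij}=[\,j>\mu_i\,]$.

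From this computation everything follows formally. Writing a general point as $X=\sum_{\mu}c_{\mu}M^{\mu}$, linearity gives $c_{ij}(X)=\sum_{\mu}c_{\mu}\,[(i,j)\notin\mu]$. Since $\lambda\subseteq\mu$ for every $\mu$ in the sum, this value is $0$ on all boxes of $\lambda$; since $\mu\subseteq\nu$, it is $1$ on every box outside $\nu$; and on $\nu/\lambda$ it is a convex combination of $0$'s and $1$'s, hence lies in $[0,1]$. Because each indicator $[(i,j)\notin\mu]$ is weakly increasing in $i$ and in $j$ (as $\mu$ is a partition), so is $c(X)$; restricting $c(X)$ to the boxes of $P(\nu/\lambda)$ therefore yields a function valued in $[0,1]$ that is monotone along the poset order, i.e.\ a point of $\mathcal{O}(P(\nu/\lambda))$. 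Thus $\Psi$ maps $\pasm(\nu/\lambda)$ into the order polytope.

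For bijectivity I would argue on two fronts. Injectivity: the values of $\Psi(X)$ on $\nu/\lambda$, together with the fixed boundary values ($0$ on $\lambda$, $1$ off $\nu$) established above, determine the entire corner-sum matrix of $X$, and hence $X$ itself via the inverse corner-sum formula, so $\Psi$ is injective on $\aff(\pasm(\nu/\lambda))$. Surjectivity is cleanest at the level of vertices: the assignment $\mu\mapsto\nu\setminus\mu$ is a bijection between partitions $\mu$ with $\lambda\subseteq\mu\subseteq\nu$ and order filters (up-sets) of $P(\nu/\lambda)$, and under $\Psi$ the vertex $M^{\mu}$ maps to the indicator $\mathbf{1}_{\nu\setminus\mu}$ of the corresponding filter. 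Since the vertices of $\mathcal{O}(P(\nu/\lambda))$ are exactly the indicators of its filters, the affine map $\Psi$ carries the vertex set of $\pasm(\nu/\lambda)$ bijectively onto the vertex set of $\mathcal{O}(P(\nu/\lambda))$, and therefore maps one convex hull onto the other.

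Finally, for the integral equivalence I would verify the lattice condition. In the forward direction, corner sums of an integer matrix are integers, so $\Psi$ sends $\mathbb{Z}^{mn}\cap\aff(\pasm(\nu/\lambda))$ into the integer points of the target; in the reverse direction, an integer point of the (full-dimensional) order polytope's affine span, extended by the integer boundary values $0$ and $1$ and pushed through the integer inverse corner-sum formula, returns an integer matrix lying in $\aff(\pasm(\nu/\lambda))$. These two integer maps are mutually inverse, giving the required lattice bijection. I expect the main obstacle to be the bookkeeping around the boundary regions: confirming that the reconstructed matrix has the prescribed zeros inside $\lambda$ and southeast of the border strip associated to $\nu$, and the prescribed first row and column sums of $1$ with all other line sums $0$. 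This amounts to checking that the telescoped differences of the extended corner-sum matrix vanish exactly where Theorem~\ref{thm:ineq_desc} demands, which is precisely where the shape of the border strip must be handled with care.
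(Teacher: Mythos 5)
Your proposal is correct, and its core mechanism differs from the paper's in an instructive way. The paper proves containment of the image in $\mathcal{O}(P(\nu/\lambda))$ (Lemma~\ref{lem:psi}) by invoking the inequality description of Theorem~\ref{thm:ineq_desc} --- nonnegativity of partial column sums gives $c_{ij}\geq 0$ and monotonicity, and the line-sum conditions give $c_{mn}=1$ --- and then concludes bijectivity from the observation that $\Psi$ is a unimodular (lower-triangular $0,1$) transformation. You instead compute the map on vertices: the telescoping identity $c_{ij}(M^{\mu})=[\,j>\mu_i\,]$ shows $\Psi(M^{\mu})$ is the indicator of the filter $\nu\setminus\mu$ of $P(\nu/\lambda)$, after which containment follows by convexity and, crucially, surjectivity follows from Stanley's description of the vertices of an order polytope as indicators of filters together with the bijection $\mu\mapsto\nu\setminus\mu$ between partitions $\lambda\subseteq\mu\subseteq\nu$ and filters. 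This buys you something real: the paper's step from ``injective affine map whose image lies in $\mathcal{O}(P(\nu/\lambda))$'' to ``bijection onto $\mathcal{O}(P(\nu/\lambda))$'' is asserted rather than argued, whereas your vertex-to-vertex correspondence supplies the surjectivity explicitly; it also makes the proof independent of Theorem~\ref{thm:ineq_desc}. The costs are symmetric: you must verify the filter bijection (routine, since order ideals of $P(\nu)$ containing $\lambda$ are exactly such $\mu$) and handle the degenerate rows with $\mu_{i-1}=\mu_i$ in the telescoping (where the $+1$ is absent along with the $-1$, but the identity still holds), while the paper's route reuses machinery it has already built. Your treatment of the lattice condition, reconstructing an integer matrix from integer values on $\nu/\lambda$ via the fixed boundary values and the inverse corner-sum formula, is also slightly more careful than the paper's appeal to unimodularity, since the two polytopes live in ambient spaces of different dimensions.
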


The following lemma shows the map $\Psi$ sends $\pasm(\nu / \lambda,m,n)$ into the order polytope. Then in the proof of Theorem~\ref{thm:int_equiv}, we show $\Psi$ is an integral equivalence.

\begin{lemma}\label{lem:psi}
The image of $\Psi$ is in the order polytope $\mathcal{O}(P(\nu / \lambda))$. 
\end{lemma}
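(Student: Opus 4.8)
The plan is to verify directly that for every point $a\in\pasm(\nu/\lambda)$ the function $g_a$ given by $g_a(i,j)=c_{ij}$ meets the two defining requirements of the order polytope $\mathcal{O}(P(\nu/\lambda))$: each coordinate lies in $[0,1]$, and $g_a$ is order-preserving for the partial order of $P(\nu/\lambda)$. Throughout I would invoke Theorem~\ref{thm:ineq_desc}, which guarantees that $a$ satisfies the partial column sum bounds (\ref{pasmeq2}), the partial row sum bounds (\ref{pasmeq3}), and the total conditions (\ref{pasmeq4}) and (\ref{pasmeq6}). The organizing observation is that one-step differences of corner sums are exactly these partial sums:
\[
c_{i,j+1}-c_{ij}=\sum_{i'=1}^{i}a_{i',j+1},\qquad c_{i+1,j}-c_{ij}=\sum_{j'=1}^{j}a_{i+1,j'},
\]
so a unit step east picks up a partial column sum and a unit step south picks up a partial row sum.

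For monotonicity, both displayed differences are nonnegative by (\ref{pasmeq2}) and (\ref{pasmeq3}), so $c_{ij}$ is weakly increasing as we move east or south across the whole grid. Given comparable elements $(i,j)\le(i',j')$ of $P(\nu/\lambda)$, which by definition means $i\le i'$ and $j\le j'$, I would join them by any monotone lattice path of unit east/south steps and chain the inequalities to obtain $g_a(i,j)\le g_a(i',j')$. This argument is insensitive to the skew shape: the corner sum is defined at every position of the matrix and the partial-sum inequalities hold at every position, so the chain is valid even when the intermediate cells of the path fall in $\lambda$ or outside $\nu$.

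For the coordinate bounds, the lower bound is immediate from $c_{ij}=\sum_{j'=1}^{j}\bigl(\sum_{i'=1}^{i}a_{i'j'}\bigr)$, a sum of partial column sums, each nonnegative by (\ref{pasmeq2}). The upper bound is the step that needs a small idea: the total conditions force $\sum_{i'=1}^{i}\sum_{j'=1}^{n}a_{i'j'}=1$ for every $i\ge 1$, since the first row sums to $1$ by (\ref{pasmeq4}) and all later rows sum to $0$ by (\ref{pasmeq6}). I would then rewrite
\[
c_{ij}=1-\sum_{j'=j+1}^{n}\Bigl(\sum_{i'=1}^{i}a_{i'j'}\Bigr),
\]
and note that the subtracted quantity is again a sum of partial column sums, hence nonnegative, giving $c_{ij}\le 1$. (Alternatively, since $\Psi$ is affine and $\mathcal{O}(P(\nu/\lambda))$ is convex, it would suffice to check the claim on the vertices $M^{\mu}$, whose corner sums form $0/1$ order-preserving labelings; but the direct argument above is more self-contained.)

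I expect the only genuine obstacle to be the upper bound $c_{ij}\le 1$: unlike the lower bound and the monotonicity, it is not visible from the partial-sum inequalities by themselves and truly relies on the normalization that the first row totals $1$ while every later row vanishes, used to express the corner sum as $1$ minus a nonnegative sum. Once the corner-sum differences are identified with partial sums, everything else is a routine path argument, and assembling the three facts yields $g_a\in\mathcal{O}(P(\nu/\lambda))$, proving the lemma.
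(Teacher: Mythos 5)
Your proposal is correct and follows essentially the same route as the paper: nonnegativity of $c_{ij}$ as a sum of partial column sums, monotonicity via the identities $c_{i+1,j}-c_{ij}=\sum_{j'\le j}a_{i+1,j'}\ge 0$ and $c_{i,j+1}-c_{ij}=\sum_{i'\le i}a_{i',j+1}\ge 0$, and the upper bound from the row-sum normalization (\ref{pasmeq4}), (\ref{pasmeq6}). The only cosmetic difference is that you deduce $c_{ij}\le 1$ by writing $c_{ij}=c_{in}-(\text{nonnegative tail})$ with $c_{in}=1$, whereas the paper notes $c_{mn}=1$ and appeals to the already-established monotonicity; these are the same mechanism.
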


\begin{proof}
We begin by showing the image of $\Psi$ is in $\mathcal{A}(P(\nu / \lambda), [0,1])$. We first note that $c_{ij}\geq 0$ by (\ref{pasmeq2}), since $c_{ij}$ is a sum of partial column sums, which are all nonnegative.

We now show $c_{ij}\leq 1$. The first row sums to $1$ by (\ref{pasmeq4}) and the other rows sum to $0$ by (\ref{pasmeq6}). Thus the total matrix sum is $1$, so $c_{mn}=1$.

Thus, the proof will be complete by showing $c_{ij}\leq c_{i'j'}$ whenever $i\leq i'$ and $j\leq j'$. 
Consider $c_{ij}$ and $c_{i+1,j}$. $c_{i+1,j}=c_{ij} + \displaystyle\sum_{j'=1}^j X_{i+1,j'}$. By (\ref{pasmeq2}), the partial column sum $\displaystyle\sum_{j'=1}^j X_{i+1,j'}\geq 0$, so $c_{ij}\leq c_{i+1,j}$. The inequality $c_{ij}\leq c_{i,j+1}$ follows analogously. Therefore, $\Psi$ maps $\pasm(\nu / \lambda,m,n)$ into the order polytope $\mathcal{O}(P(\nu / \lambda))$. 
\end{proof}

\begin{proof}[Proof of Theorem \ref{thm:int_equiv}]
By definition, the map $\Psi$ is an affine transformation of the form $X\mapsto AX$ where $A$ is a $0, 1$-matrix that is lower triangular with ones on the diagonal. So it is a unimodular transformation. By Lemma~\ref{lem:psi}, the image of this map is in the order polytope  $\mathcal{O}(P(\nu / \lambda))$.
Thus, $\Psi$ is a bijection between $\pasm(\nu / \lambda,m,n)$ and $\mathcal{O}(P(\nu / \lambda))$ that preserves their respective lattices. This
shows that the two polytopes are integrally equivalent. 
\end{proof}

By Stanley’s theory of order polytopes \cite[Theorem 3.9]{Stop} we express the volume and Ehrhart
polynomial of the polytopes in this family in terms of their associated posets. We use the following standard definitions (see e.g.~\cite[Section 4]{Stop}).
\begin{definition} 
Let $P$ be a finite poset with elements $\{p_1,\ldots,p_d\}$ and $t$ a positive integer. Define $\Omega(P, t)$ to be the number of order-preserving maps $\eta: P \rightarrow \{1, \ldots, t\}$. That is, if $p_i \leq p_j$ in $P$ then $\eta(p_i) \leq \eta(p_j)$. Then $\Omega(P, t)$ is a polynomial function of $t$ of degree $d$, called the \emph{order polynomial} of $P$. 

A \emph{linear extension} of a poset $P$ is an order-preserving bijection $\eta: P \rightarrow \{1, \ldots, d\}$. Let $e(P)$ denote the number of linear extensions of $P$.
\end{definition}


\begin{corollary}
\label{cor:volume}
The normalized volume of $\pasm(\nu / \lambda,m,n)$ is  $e(P(\nu / \lambda))$  and its Ehrhart polynomial is $L_{\pasm(\nu / \lambda,m,n)}(t) = \Omega(P(\nu / \lambda),t + 1)$. 
In particular, $e(P(\nu / \lambda))$ is the number of skew standard Young tableaux, enumerated by the Naruse hook-length formula
\[e(P(\nu / \lambda))=|\nu / \lambda|!\sum_{D\in\mathcal{E}(\nu / \lambda)}\prod_{u\in[\nu]\setminus D}\frac{1}{h(u)},\]
where $\mathcal{E}(\nu / \lambda)$ is the set of excited diagrams of $\nu / \lambda$ and $h(u)$ is the hook number of a box $u$.
\end{corollary}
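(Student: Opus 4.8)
The plan is to invoke Theorem~\ref{thm:int_equiv} together with Stanley's theory of order polytopes to transfer all volume and Ehrhart data from $\pasm(\nu / \lambda)$ to the poset $P(\nu / \lambda)$, and then to identify the resulting poset invariant with the count of skew standard Young tableaux. Since integrally equivalent polytopes share the same normalized volume and Ehrhart polynomial (as noted just before Theorem~\ref{thm:int_equiv}), it suffices to compute these invariants for the order polytope $\mathcal{O}(P(\nu / \lambda))$ and translate.

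First I would apply Stanley's results \cite[Theorem 3.9]{Stop} on order polytopes. That theorem states that the normalized volume of $\mathcal{O}(P)$ equals the number of linear extensions $e(P)$, and that the Ehrhart polynomial of $\mathcal{O}(P)$ is the order polynomial evaluated at $t+1$, namely $\Omega(P, t+1)$. Combining this with the integral equivalence of Theorem~\ref{thm:int_equiv} immediately yields
\[
\mathrm{vol}\big(\pasm(\nu / \lambda)\big) = e(P(\nu / \lambda)) \quad\text{and}\quad L_{\pasm(\nu / \lambda)}(t) = \Omega(P(\nu / \lambda), t+1),
\]
which is the first assertion of the corollary. This step is essentially a direct citation plus the transfer principle for integral equivalence.

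The second step is to recognize $e(P(\nu / \lambda))$ as the number of standard Young tableaux of skew shape $\nu / \lambda$. This is a standard and classical identification: a linear extension of the poset $P(\nu / \lambda)$ — whose order is the componentwise (cell-containment) order on the boxes of the skew diagram — is exactly a filling of the cells of $\nu / \lambda$ with $1, \ldots, |\nu / \lambda|$ that increases along rows and down columns, i.e.\ a standard Young tableau. Hence $e(P(\nu / \lambda))$ counts skew SYT. I would then simply cite the Naruse hook-length formula, which expresses this count as a sum over excited diagrams $\mathcal{E}(\nu / \lambda)$ of products of reciprocal hook lengths, giving the displayed formula.

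I do not anticipate a genuine obstacle here, since the corollary is a formal consequence of the already-established Theorem~\ref{thm:int_equiv} plus well-known external results. The only point requiring minor care is bookkeeping: confirming that the poset convention in the definition of $P(\nu / \lambda)$ (boxes ordered by $(i,j) \leq (i',j')$ iff $i \leq i'$ and $j \leq j'$, with the deleted cells of $\lambda$ removed) does in fact make linear extensions coincide with the row- and column-increasing fillings counted by the skew SYT convention, so that the Naruse formula applies verbatim. This verification is routine and amounts to matching definitions.
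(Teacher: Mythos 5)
Your proposal is correct and follows essentially the same route as the paper: integral equivalence (Theorem~\ref{thm:int_equiv}) plus Stanley's order-polytope results transfer the volume and Ehrhart polynomial to $P(\nu/\lambda)$, linear extensions are identified with skew standard Young tableaux, and the Naruse formula is cited for the enumeration. The paper's proof is just a terser version of the same argument (it notes the SYT/linear-extension identification ``by rotation''), so there is nothing to add.
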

\begin{proof}
The Naruse hook-length formula, found by Naruse~\cite{NaruseHLF} and proved in~\cite[Theorem 1.2]{MPP2018}, counts the number of skew standard Young tableaux of a given skew shape $\nu / \lambda$. These are  equivalent to linear extensions of $P(\nu / \lambda)$ by rotation. The result follows by Stanley’s theory of order polytopes \cite[Theorem 3.9]{Stop}.
\end{proof}

We now state a few more definitions from \cite{ASMCRY} that are needed for Corollary~\ref{cor:flow} which interprets $\pasm(\nu / \lambda,m,n)$ as a flow polytope.
The first definition will be used to specify a graph $G_{P(\nu / \lambda)}$ corresponding to the poset $P(\nu / \lambda)$.

\begin{definition}
Given a poset $P$, let $\widehat{P}$ denote the poset constructed by adjoining a new minimum element $\widehat{0}$ and a new maximum element $\widehat{1}$. A poset $P$ is \emph{strongly planar} if the Hasse diagram of $\widehat{P}$ has a planar embedding into $\mathbb{R}^2$ such that if vertex $p_i$ is in position $(x_i, y_i)$ then $y_i < y_j$ (in the usual ordering on $\mathbb{R}$) whenever
$p_i < p_j$ in $P$. 

The \emph{truncated dual} of a planar graph $G$ is the dual graph of $G$ with the vertex corresponding to the infinite face deleted.

Given a strongly planar poset $P$, let $H$ be the planar graph obtained from the Hasse diagram of $\widehat{P}$ with two additional edges
from $\widehat{0}$ to $\widehat{1}$, one of which goes to the left of all the poset elements and another to the right. We then define the graph $G_P$ to be the truncated dual of $H$. The orientation of $G_P$ is inherited from the poset in the following way: if in the construction of the truncated dual, the edge $e$ of $G_P$ crosses the edge $p_i \rightarrow p_j$ where $p_i < p_j$ in $P$, then $p_j$ is on the left and $p_i$ is on the right as you traverse $e$. 
\end{definition}

The next definition will be used to specify a polytope corresponding to the graph $G_{P(\nu / \lambda)}$.

\begin{definition}
Let $G$ be a connected graph on the vertex set $\left[k\right] := \{1, 2, \ldots, k\}$ with edges directed from smaller to larger vertex. Denote by $\text{in}(e)$ the initial vertex of edge $e$ and $\text{fin}(e)$ the final vertex of edge $e$.

A \emph{flow} $fl$ of size one on $G$ 
is a function $fl:E(G) \rightarrow \mathbb{R}_{\geq 0}$ such that 

\begin{align*} 
\sum_{e\in E, \text{in}(e)=1} fl(e) = \sum_{e\in E, \text{fin}(e)=k} fl(e) &= 1,\\
\sum_{e\in E, \text{fin}(e)=i} fl(e) = \sum_{e\in E, \text{in}(e)=i} fl(e) &\text{  for } 2 \leq i \leq k-1. 
\end{align*}
The \emph{flow polytope} $\mathcal{F}_G$ associated to the graph $G$ is the set of all flows $fl : E(G) \rightarrow \mathbb{R}_{\geq 0}$ of size one on $G$.
\end{definition}

We now obtain the following, as a corollary of Theorem ~\ref{thm:int_equiv} and \cite[Theorem 3.14]{ASMCRY}. See Figure~\ref{fig:flow_polytope} for an example.

\begin{corollary}
\label{cor:flow}
The $(\lambda,n)$-partial ASM polytope $\pasm(\nu / \lambda,m,n)$ is integrally equivalent to the flow polytope $\mathcal{F}_{G_{P(\nu / \lambda)}}$.
\end{corollary}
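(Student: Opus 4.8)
The plan is to obtain Corollary~\ref{cor:flow} by composing the integral equivalence already established in Theorem~\ref{thm:int_equiv} with a known integral equivalence between order polytopes of strongly planar posets and flow polytopes. Concretely, Theorem~\ref{thm:int_equiv} gives an integral equivalence $\pasm(\nu / \lambda) \cong \mathcal{O}(P(\nu / \lambda))$, and since integral equivalence is transitive (a composition of affine transformations each restricting to a lattice bijection on affine spans is again such a transformation), it suffices to show $\mathcal{O}(P(\nu / \lambda))$ is integrally equivalent to $\mathcal{F}_{G_{P(\nu / \lambda)}}$. This second equivalence is exactly the content of \cite[Theorem 3.14]{ASMCRY}, which asserts that the order polytope of any strongly planar poset is integrally equivalent to the flow polytope of the associated graph $G_P$ built from its truncated dual.

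The one genuine hypothesis I would need to verify is that $P(\nu / \lambda)$ is strongly planar, so that \cite[Theorem 3.14]{ASMCRY} applies. First I would recall that $P(\nu / \lambda)$ is the poset whose elements are the boxes of the skew shape $\nu / \lambda$, ordered by $(i,j) \leq (i',j')$ iff $i \leq i'$ and $j \leq j'$. Drawing each box $(i,j)$ at a planar position determined by its row and column (for instance placing it at coordinates reflecting $i+j$ in the vertical direction) gives a planar Hasse diagram in which covers go strictly upward, since a cover $(i,j) \lessdot (i',j')$ increases $i+j$ by one. Adjoining $\widehat{0}$ below the unique minimal boxes and $\widehat{1}$ above the maximal boxes preserves planarity because the minimal and maximal elements lie on the boundary of this grid-like drawing, so $\widehat{0}$ and $\widehat{1}$ can be attached without edge crossings. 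Hence $\widehat{P(\nu / \lambda)}$ has a planar embedding with the required monotonicity in the $y$-coordinate, which is precisely strong planarity.

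With strong planarity in hand, the proof is then a short two-step chain: apply Theorem~\ref{thm:int_equiv} to pass from $\pasm(\nu / \lambda)$ to $\mathcal{O}(P(\nu / \lambda))$, then apply \cite[Theorem 3.14]{ASMCRY} to the strongly planar poset $P(\nu / \lambda)$ to pass from $\mathcal{O}(P(\nu / \lambda))$ to $\mathcal{F}_{G_{P(\nu / \lambda)}}$, and finally invoke transitivity of integral equivalence. I expect the only real point requiring care is the strong planarity check, and even that is routine because skew-shape posets of partitions are the prototypical strongly planar posets (this is the same family underlying the analogous result for the ASM-CRY polytope in Theorem~\ref{thm:asmcryequiv}, where $P(\delta_n / \lambda)$ plays the same role). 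The rest is formal: no new affine maps need to be constructed, since both constituent equivalences are supplied by prior results.
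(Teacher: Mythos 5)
Your proposal is correct and follows the paper's proof exactly: both compose the integral equivalence of Theorem~\ref{thm:int_equiv} with \cite[Theorem 3.14]{ASMCRY} applied to the strongly planar poset $P(\nu / \lambda)$, using transitivity of integral equivalence. Your additional verification that $P(\nu / \lambda)$ is strongly planar is a welcome elaboration of a hypothesis the paper simply asserts.
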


\begin{figure}[hbtp]
\includegraphics[scale=.65]{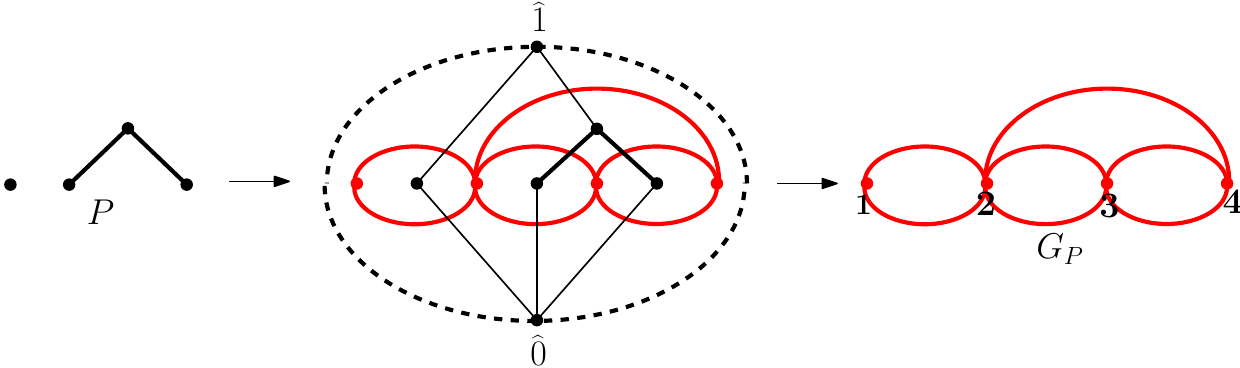}
\caption{Illustration of the map from the order polytope $\mathcal{O}(P(\nu / \lambda))$ to the flow polytope $\mathcal{F}_{G_{P(\nu / \lambda)}}$ for $\lambda = (3,1)$ and $\nu = (4,2,2)$}
\label{fig:flow_polytope}
\end{figure}

\begin{proof}
By Theorem~\ref{thm:int_equiv}, $\pasm(\nu / \lambda,m,n)$ is integrally equivalent to the order polytope $\mathcal{O}(P(\nu / \lambda))$. By \cite[Theorem 3.14]{ASMCRY}, since the poset $P(\nu / \lambda)$
is strongly planar, $\mathcal{O}(P(\nu / \lambda))$ is integrally
equivalent to the flow polytope $\mathcal{F}_{G_{P(\nu / \lambda)}}$. Thus, $\pasm(\nu / \lambda,m,n)$ is also integrally
equivalent to the flow polytope $\mathcal{F}_{G_{P(\nu / \lambda)}}$. 
\end{proof}

Our final result describes how $\pasm(\nu / \lambda,m,n)$ relates to the $\lambda$-ASMCRY polytope $\asmcry(\lambda,n)$ of \cite{ASMCRY} (see  Definition~\ref{def:asmcry}) in the case $m=n$ and $\nu = \delta_n := (n-1,n-2,\ldots,2,1)$.
\begin{theorem}
\label{prop:equiv}
Given $\lambda\subseteq(n-1,n-2,\ldots,2,1)\subset(n-1)^{n-1}$, an explicit integral equivalence $\varphi$ from $\pasm(\delta_n / \lambda,n,n)$ to  $\asmcry(\lambda,n)$ is given as follows. For each $M\in\pasm(\delta_n / \lambda,n,n)$, define 
\[\varphi(M)=
\begin{cases}
M_{ij}+1 & \text{if } j= n-i+2\\
M_{ij} & \text{otherwise.}
\end{cases}
\] 
\end{theorem}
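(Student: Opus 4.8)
The plan is to verify that $\varphi$ is an integral equivalence by checking three things: that $\varphi$ is an affine transformation that maps lattice points to lattice points bijectively, that $\varphi$ carries each vertex $M^\mu$ of $\pasm(\delta_n/\lambda,n,n)$ to a vertex of $\asmcry(\lambda,n)$, and that $\varphi$ surjects onto the vertices of $\asmcry(\lambda,n)$. First I would observe that $\varphi$ is simply the addition of the fixed $0/1$-matrix $J$ with $J_{ij}=1$ exactly on the antidiagonal $j=n-i+2$ and $0$ elsewhere; thus $\varphi(M)=M+J$ is a translation, hence affine and unimodular, and it automatically carries $\mathbb{Z}^{n^2}$ bijectively to itself, so the lattice condition is immediate once we confirm the image polytope is the right one.

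The substantive step is to show that $\varphi$ maps the vertices $M^\mu$ (for $\lambda\subseteq\mu\subseteq\delta_n$) bijectively onto the vertices of $\asmcry(\lambda,n)$, since both polytopes are the convex hulls of their vertex sets and $\varphi$ is affine. For this I would take an arbitrary $M^\mu$, described explicitly by the cases \eqref{case0}--\eqref{case4} in Definition~\ref{Mmu}, and compute $\varphi(M^\mu)=M^\mu+J$ entrywise, then verify directly that the result satisfies the alternating sign matrix conditions \eqref{eq:ASM1}--\eqref{eq:ASM3} together with the vanishing constraints defining $\asmcry(\lambda,n)$ from Definition~\ref{def:asmcry}, namely $a_{ij}=0$ for $i+j\geq n+3$ or $(i,j)\in\lambda$. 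The key geometric point is that the antidiagonal $j=n-i+2$ lies exactly along the main diagonal of the staircase region, and adding $1$ there converts the partial ASM (with a single extra $1$ in the first row/column and balancing $\pm1$'s elsewhere) into a genuine ASM whose row and column sums are all $1$: the added diagonal $1$'s supply precisely the missing mass so that every row and column sums to $1$ rather than to $0$ (for rows/columns $>1$) or staying at $1$ (for the first). I would check that the $-1$'s of $M^\mu$, which by Lemma~\ref{lem:Mmu} always sit strictly southeast of a corresponding $1$ in both their row and column, do not collide with the antidiagonal in a way that violates the ASM partial-sum conditions, using that $\mu\subseteq\delta_n$ forces all the nonzero entries of $M^\mu$ to lie weakly northwest of the antidiagonal.

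I expect the main obstacle to be the careful bookkeeping of where the antidiagonal $1$'s land relative to the $\pm1$ pattern of $M^\mu$, and in particular confirming that the partial-sum conditions \eqref{eq:ASM1} and \eqref{eq:ASM2} hold after the shift—this requires tracking, row by row and column by column, that each added diagonal $1$ either initiates a new $+1$ run or exactly cancels a pending $-1$ so that partial sums stay in $\{0,1\}$. Once the vertex-to-vertex correspondence is established, surjectivity onto the vertices of $\asmcry(\lambda,n)$ follows by reversing the construction: given a vertex $A$ of $\asmcry(\lambda,n)$, I would check that $A-J$ has entries in $\{-1,0,1\}$ and satisfies Definition~\ref{def:pasm} with the row/column sum profile of Lemma~\ref{lem:Mmu}, hence equals some $M^\mu$ with $\lambda\subseteq\mu\subseteq\delta_n$, using that the number of vertices on each side matches (both are indexed by partitions $\mu$ with $\lambda\subseteq\mu\subseteq\delta_n$). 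Since $\varphi$ is a bijection on vertices realized by a unimodular affine map, it extends to an integral equivalence of the two polytopes, completing the proof.
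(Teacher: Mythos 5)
Your proposal is correct in outline, and its central computation---checking entrywise that adding $1$ along the antidiagonal $j=n-i+2$ turns a vertex of $\pasm(\delta_n/\lambda,n,n)$ into a matrix satisfying \eqref{eq:ASM1}--\eqref{eq:ASM3} together with the vanishing conditions of Definition~\ref{def:asmcry}---is exactly the heart of the paper's proof (the paper phrases it in terms of a generic vertex with first row and column sums $1$ and the rest $0$, tracking where the $-1$'s sit relative to their compensating $1$'s, rather than through the explicit cases of Definition~\ref{Mmu}, but the bookkeeping is the same). Where you genuinely diverge is in how bijectivity is obtained. The paper does not argue surjectivity by hand: it first invokes Theorem~\ref{thm:int_equiv} and \cite[Theorem~1.1]{ASMCRY} to conclude that both polytopes are integrally equivalent to the common order polytope $\mathcal{O}(P(\delta_n/\lambda))$, hence have the same dimension and normalized volume, so a translation carrying the vertices of one into the other is automatically onto. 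You instead propose to invert the map on vertices, and that is the one soft spot in your plan: knowing that $A-J$ is a partial alternating sign matrix with the row/column sum profile of Lemma~\ref{lem:Mmu} does not by itself force $A-J$ to be one of the matrices $M^\mu$; you need either the explicit description of the vertices of $\asmcry(\lambda,n)$ from \cite{ASMCRY} (they are indexed by the partitions $\mu$ with $\lambda\subseteq\mu\subseteq\delta_n$) or the vertex count you allude to, which must itself be cited rather than assumed. With that citation in place your argument closes and is somewhat more self-contained at the vertex level; the paper's is shorter because it leans on the order-polytope equivalences already established. A minor additional point for either route: one should note that the $M^\mu$ are pairwise distinct and are precisely the vertices of $\pasm(\delta_n/\lambda,n,n)$ (this follows from Theorem~\ref{thm:face}), so that ``vertices to vertices'' is meaningful on the source side.
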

\begin{proof}
By Theorem \ref{thm:int_equiv} and \cite[Theorem 1.1]{ASMCRY}, respectively, we have that $\pasm(\delta_n / \lambda,n,n)$ and $\asmcry(\lambda,n)$ are each integrally equivalent to the order polytope $\mathcal{O}(P(\delta_n / \lambda))$. 
The map $\varphi$ given above is a translation that takes vertices to vertices, as shown below, thus it is an integral equivalence between these two polytopes.
The reason it takes vertices to vertices is as follows. A vertex in $\pasm(\delta_n / \lambda,n,n)$ is a matrix $M$ with first row and column sums of 1 and the others $0$. A vertex in $\asmcry(\lambda,n)$ is a matrix with all row and column sums of 1. Adding 1 to the last nonfixed entry in rows 2 through $n$ of $M$ changes exactly those row and column sums which were 0 into 1, so \eqref{eq:ASM3} is satisfied. Also, since the entries below the main antidiagonal in $M$ are either $0$ or $-1$, adding $1$ to these gives $1$ or $0$. If such an entry in $M$ was $-1$, there was a $1$ somewhere to its left in that row and somewhere above it in the column. If the entry in $M$ was $0$, the entire row consisted of $0$ entries or there was a $-1$ somewhere to its left in that row. Similarly, either the entire column consisted of $0$ entries or there was a $-1$ somewhere above it in the column. So adding $1$ to the entries in all of these cases satisfies \eqref{eq:ASM1} and \eqref{eq:ASM2}. Thus this results in a vertex of $\asmcry(\lambda,n)$.
\end{proof}

This shows our work is a natural but non-obvious generalization of the results of \cite{ASMCRY} to the partial alternating sign matrix case.
See Figure~\ref{fig:pasmasmcry} for an example of this correspondence.

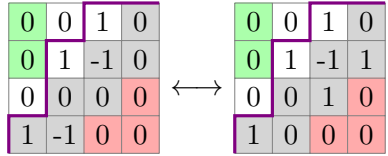
\begin{figure}[htbp]
\centering
\begin{tikzpicture}
\begin{scope}[scale=0.5]
    \draw[fill, color=green!33] (0,2) rectangle (1,4);
    
    \draw[fill, color=gray!33] (0,0) rectangle (2,1);
    \draw[fill, color=gray!33] (1,1) rectangle (3,2);
    \draw[fill, color=gray!33] (2,2) rectangle (4,3);
    \draw[fill, color=gray!33] (3,3) rectangle (4,4);
    
    \draw[fill, color=red!33] (2,0) rectangle (4,1);
    \draw[fill, color=red!33] (3,1) rectangle (4,2);
    
    \draw[very thin, color=gray!100] (0,0) grid (4,4);
    \draw[very thick, violet] (0,0) -- (0,1) -- (1,1) -- (1,3) -- (2,3) -- (2,4) -- (4,4) ;
      
    \node at (0.5,3.5) {0};
    \node at (1.5,3.5) {0};
    \node at (2.5,3.5) {1};
    \node at (3.5,3.5) {0};
   
    \node at (0.5,2.5) {0};
    \node at (1.5,2.5) {1};
    \node at (2.5,2.5) {-1};
    \node at (3.5,2.5) {0};
    
    \node at (0.5,1.5) {0};
    \node at (1.5,1.5) {0};
    \node at (2.5,1.5) {0};
    \node at (3.5,1.5) {0};
    
    \node at (0.5,0.5) {1};
    \node at (1.5,0.5) {-1};
    \node at (2.5,0.5) {0};
    \node at (3.5,0.5) {0};
    
\end{scope}
\end{tikzpicture}
\raisebox{0.8cm}{$\longleftrightarrow$}
\begin{tikzpicture}
\begin{scope}[scale=0.5]
    \draw[fill, color=green!33] (0,2) rectangle (1,4);    
    
    \draw[fill, color=gray!33] (0,0) rectangle (2,1);
    \draw[fill, color=gray!33] (1,1) rectangle (3,2);
    \draw[fill, color=gray!33] (2,2) rectangle (4,3);
    \draw[fill, color=gray!33] (3,3) rectangle (4,4);
    
    \draw[fill, color=red!33] (2,0) rectangle (4,1);
    \draw[fill, color=red!33] (3,1) rectangle (4,2);
    
    \draw[very thin, color=gray!100] (0,0) grid (4,4);
    \draw[very thick, violet] (0,0) -- (0,1) -- (1,1) -- (1,3) -- (2,3) -- (2,4) -- (4,4) ;
      
    \node at (0.5,3.5) {0};
    \node at (1.5,3.5) {0};
    \node at (2.5,3.5) {1};
    \node at (3.5,3.5) {0};
   
    \node at (0.5,2.5) {0};
    \node at (1.5,2.5) {1};
    \node at (2.5,2.5) {-1};
    \node at (3.5,2.5) {1};
    
    \node at (0.5,1.5) {0};
    \node at (1.5,1.5) {0};
    \node at (2.5,1.5) {1};
    \node at (3.5,1.5) {0};
    
    \node at (0.5,0.5) {1};
    \node at (1.5,0.5) {0};
    \node at (2.5,0.5) {0};
    \node at (3.5,0.5) {0};
    
\end{scope}
\end{tikzpicture}
\caption{An example of the correspondence in Theorem~\ref{prop:equiv}. Left: a matrix in $\pasm(\delta_4 / \lambda,4,4)$ with $\lambda = (1,1)$. Right: the corresponding matrix in $\asmcry(\lambda,4)$.}
\label{fig:pasmasmcry}
\end{figure}

\section*{Acknowledgments}
JS acknowledges support from the Simons Foundation gift MP-TSM-00002802 and NSF grant DMS-2247089.

\bibliographystyle{plain}
\bibliography{biblio}

\begin{thebibliography}{10}

\bibitem{Behrend2007HigherSA}
Roger~E. Behrend and Vincent~A. Knight.
\newblock Higher spin alternating sign matrices.
\newblock {\em Electron. J. Comb.}, 14, 2007.

\bibitem{BGHHKMY2019}
Carolina Benedetti, Rafael~S. Gonz\'{a}lez~D'Le\'{o}n, Christopher R.~H.
  Hanusa, Pamela~E. Harris, Apoorva Khare, Alejandro~H. Morales, and Martha
  Yip.
\newblock A combinatorial model for computing volumes of flow polytopes.
\newblock {\em Trans. Amer. Math. Soc.}, 372(5):3369--3404, 2019.

\bibitem{CRY2000}
Clara~S. Chan, David~P. Robbins, and David~S. Yuen.
\newblock On the volume of a certain polytope.
\newblock {\em Experiment. Math.}, 9(1):91--99, 2000.

\bibitem{HeuerStriker}
Dylan Heuer and Jessica Striker.
\newblock Partial permutation and alternating sign matrix polytopes.
\newblock {\em SIAM J. Discrete Math.}, 36(4):2863--2888, 2022.

\bibitem{JangKim2020}
Jihyeug Jang and Jang~Soo Kim.
\newblock Volumes of flow polytopes related to caracol graphs.
\newblock {\em Electron. J. Combin.}, 27(4):Paper No. 4.21, 21, 2020.

\bibitem{kuperbergASMpf}
Greg Kuperberg.
\newblock Another proof of the alternating-sign matrix conjecture.
\newblock {\em Internat. Math. Res. Notices}, (3):139--150, 1996.

\bibitem{LiuMeszarosStDizier2019}
Ricky~I. Liu, Karola M\'{e}sz\'{a}ros, and Avery St.~Dizier.
\newblock Gelfand-{T}setlin polytopes: a story of flow and order polytopes.
\newblock {\em SIAM J. Discrete Math.}, 33(4):2394--2415, 2019.

\bibitem{MeszarosMorales2019}
Karola M\'{e}sz\'{a}ros and Alejandro~H. Morales.
\newblock Volumes and {E}hrhart polynomials of flow polytopes.
\newblock {\em Math. Z.}, 293(3-4):1369--1401, 2019.

\bibitem{ASMCRY}
Karola M\'{e}sz\'{a}ros, Alejandro~H. Morales, and Jessica Striker.
\newblock On flow polytopes, order polytopes, and certain faces of the
  alternating sign matrix polytope.
\newblock {\em Discrete Comput. Geom.}, 62(1):128--163, 2019.

\bibitem{MRRASMDPP}
W.~H. Mills, David~P. Robbins, and Howard Rumsey, Jr.
\newblock Alternating sign matrices and descending plane partitions.
\newblock {\em J. Combin. Theory Ser. A}, 34(3):340--359, 1983.

\bibitem{MPP2018}
Alejandro~H. Morales, Igor Pak, and Greta Panova.
\newblock Hook formulas for skew shapes {I}. {$q$}-analogues and bijections.
\newblock {\em J. Combin. Theory Ser. A}, 154:350--405, 2018.

\bibitem{NaruseHLF}
H.\ Naruse.
\newblock Schubert calculus and hook formula, 2014.
\newblock https://www.mat.univie.ac.at/~slc/wpapers/s73vortrag/naruse.pdf.

\bibitem{RobbinsRumsey}
David~P. Robbins and Howard Rumsey, Jr.
\newblock Determinants and alternating sign matrices.
\newblock {\em Adv. in Math.}, 62(2):169--184, 1986.

\bibitem{SolhjemStriker}
Sara Solhjem and Jessica Striker.
\newblock Sign matrix polytopes from {Y}oung tableaux.
\newblock {\em Linear Algebra Appl.}, 574:84--122, 2019.

\bibitem{Stop}
Richard~P. Stanley.
\newblock Two poset polytopes.
\newblock {\em Discrete Comput. Geom.}, 1(1):9--23, 1986.

\bibitem{ASMPoly}
Jessica Striker.
\newblock The alternating sign matrix polytope.
\newblock {\em Electron. J. Comb.}, 16, 2009.

\bibitem{unifyingframework}
Matias von Bell, Rafael~S. Gonz\'{a}lez~D'Le\'{o}n, Francisco~A.
  Mayorga~Cetina, and Martha Yip.
\newblock A unifying framework for the {$\nu$}-{T}amari lattice and principal
  order ideals in {Y}oung's lattice.
\newblock {\em Combinatorica}, 43(3):479--504, 2023.

\bibitem{ZEILASM}
Doron Zeilberger.
\newblock Proof of the alternating sign matrix conjecture.
\newblock {\em Electron. J. Combin.}, 3(2):Research Paper 13, approx. 84, 1996.
\newblock The Foata Festschrift.

\bibitem{ZeilbergerCRY}
Doron Zeilberger.
\newblock Proof of a conjecture of {C}han, {R}obbins, and {Y}uen.
\newblock {\em Electron. Trans. Numer. Anal.}, 9:147--148, 1999.
\newblock Orthogonal polynomials: numerical and symbolic algorithms
  (Legan\'{e}s, 1998).

\end{thebibliography}
\end{document}